\documentclass{siamart250211}
\usepackage{graphicx}
\usepackage{epstopdf} 
\usepackage{hyperref}    
\usepackage{subcaption}
\usepackage{amssymb,latexsym,amsmath}
\usepackage{xcolor}
\usepackage{enumerate}
\usepackage{booktabs}
\usepackage{MnSymbol,bbding,pifont}
\usepackage{tikz,tikz-cd}
\usetikzlibrary{hobby}
\usetikzlibrary{arrows}
\usetikzlibrary{positioning}
\usetikzlibrary{shapes,snakes}
\usetikzlibrary{cd}

\DeclareMathOperator{\diag}{diag}

\DeclareMathOperator{\tr}{trace}

\DeclareMathOperator{\skeww}{skew}

\newcommand{\Rnn}{\mathbb{R}^{n\times n}}
\newcommand{\Cnn}{\mathbb{C}^{n\times n}}
\newcommand{\R}{\mathbb{R}}
\newcommand{\C}{\mathbb{C}}

\newcommand{\NN}{\mathcal{N}}
\newcommand{\proj}{\mathrm{Proj}}
\newcommand{\grad}{\mathrm{grad}}
\newcommand{\hess}{\mathrm{Hess}}

\newsiamremark{remark}{{\sc Remark}}
\newtheorem{example}[theorem]{Example}

\newcommand{\hide}[1]{}

\numberwithin{equation}{section}

\begin{document}

\title{Closest normal matrix found again!\\
Using Riemannian optimization\thanks{Submitted to the editors on ???}}

\author{Vanni Noferini\thanks{Aalto University, Department of Mathematics and Systems Analysis, P.O. Box 11100, FI-00076, Aalto, Finland (\email{vanni.noferini@aalto.fi}). Supported by a Research Council of Finland grant (decision number 370932).}
\and Matvei Zhukov\thanks{Corresponding author. Aalto University, Department of Mathematics and Systems Analysis, P.O. Box 11100, FI-00076, Aalto, Finland (\email{matvei.zhukov@aalto.fi}). Supported by a Research Council of Finland grant (decision number 370932).}}

\date{}

\maketitle

\begin{abstract}
We propose an approach based on Riemannian optimization to compute a nearest normal matrix to a given one. The problem can be formulated as the minimization of a smooth function either on the manifold \(U(n)\) of \(n\times n\) unitary matrices or on the flag manifold \(U(n)/U(1)^n\). The flag manifold is particularly suitable for theoretical analysis; we characterize the global maximum of the objective function and prove that, for generic inputs, its local minimizers are finitely many and isolated; in turn, this implies the original nearest normal matrix problem generically has finitely many local minimizers, all with distinct eigenvalues. We also develop a Riemannian trust-region method that improves substantially on classical algorithms and can handle considerably larger matrices, as well as a variant for computing the nearest real normal matrix. The paper is complemented by extensive numerical experiments.
\end{abstract}

\begin{keywords}
Matrix nearness problem, normal matrix, unitary matrix, flag manifold, Riemannian optimization
\end{keywords}

\begin{AMS}
65F99, 15A60, 65K05, 65K10
\end{AMS}

\section{Introduction}

A square matrix $N \in \Cnn$ is called \emph{normal} if it commutes with its conjugate transpose, $NN^*=N^*N$. There are many other equivalent definitions \cite{EI,GJSW}, of which the most relevant to the present paper is that $N$ is normal if and only if it is unitarily diagonalizable, i.e., $N=Q \Lambda Q^*$ for a unitary $Q \in U(n) \subsetneq \Cnn$ and a diagonal $\Lambda \in \Cnn$; clearly, $Q$ and $\Lambda$ encode respectively eigenvectors and eigenvalues of $N$. Given a complex square matrix $A$, computing a normal matrix $N$ nearest to it, as well as the distance from $A$ to $N$, is a great classic in numerical linear algebra \cite{Causey,Chu,Gabriel79,Gabriel,GS,Higham,NPR,Ruhe}. Although in principle this problem can be posed with respect to any distance defined on $\Cnn$, in this paper we focus on the Frobenius distance $\|A-N\|_F$. Not only is this  choice the most common in the literature, but it also has the mathematical advantage that the Frobenius distance is induced by a real inner product, and thus a real Hilbert space structure naturally arises on $\Cnn$. Importantly for our article, a further consequence is that every embedded real submanifold of $\Cnn$ inherits a natural Riemannian metric.

Computing a nearest normal matrix has numerous applications. We give just some examples. In numerical linear algebra, the distance from $A$ to the set of normal matrices is related to properties that are important for spectral computations, such as the condition number of the matrix of eigenvectors of $A$ \cite{GV} or the size of the $\varepsilon$-pseudospectra of $A$ \cite{TE}. Classically, bounds on the distance from normality, including for instance Henrici's departure from normality \cite{Higham} (which is an upper bound), have been used to study these properties. Problematically, these bounds sometimes provide extremely inadequate approximations of $\|A-N\|_F$, especially when $\|A-N\|_F/\|A\|_F$ is small \cite{Higham}. Therefore, a numerical method that can compute, or reliably estimate, the actual distance may be preferable. Moreover, if the distance between $A$ and the normal matrix $N$ is certified to be small, then the perfectly conditioned eigenvalues of $N$ can be computed as a reasonable approximation of those of $A$ \cite{NPR}. In quantum mechanics, two observable physical quantities can be measured simultaneously if and only if their corresponding Hermitian matrices $H_1$ and $H_2$ commute \cite{SN}. Suppose that in practice $H_1$ and $H_2$ are only close to being a commuting pair because they are derived from an inaccurate model or from experimental data affected by measurement errors. Then, taking into account that a matrix is normal if and only if its Hermitian and skew-Hermitian parts commute \cite{GJSW}, joint measurability can be restored  by first computing the normal matrix $N$ nearest to $A=H_1 + i H_2$ and next replacing $H_1 \leftarrow (N+N^*)/2$ and  $H_2 \leftarrow (N-N^*)/(2i)$. In control theory, it has been suggested to approximate a transfer function to the normal matrix nearest to it, to improve stability of eigenvalue computation \cite{DK83,DK84}. For a dynamical system modeled by the system of ordinary differential equations $\dot{x}(t) = A x(t)$, the norm of the solution $x(t)$ is guaranteed to converge to $0$ when $A$ is stable, i.e., its eigenvalues have negative real parts. However, for a non-normal $A$, transient growth of the solution is a well documented phenomenon \cite{MV,TE}. If $A$ is close to being normal and for sufficiently small values of $t$, the effect of transient growth can be bounded by observing $\displaystyle \|e^{tA}\| \leq \|e^{tN}\| \cdot  e^{t\|A-N\|}.$

A numerical solution of the nearest normal matrix problem has remained a challenging open problem in numerical linear algebra for some time in the past, until in 1987 it was solved independently by Gabriel \cite{Gabriel} and Ruhe \cite{Ruhe} (and indeed, our article's title is an homage to \cite{Ruhe}). As explained by Higham in the review \cite{Higham}, both Gabriel's and Ruhe's Jacobi-type algorithms are based on earlier work by Causey \cite{Causey}. From the viewpoint of Riemannian optimization, both methods can be seen as variants of coordinate descent on the manifold of unitary matrices. The connection with Riemannian optimization was also exploited by Chu \cite{Chu}, who however did not tackle the general problem described above, but only its variant of computing the nearest real normal matrix with a given spectrum, which is a simpler task. Methods that do not work on a manifold also exist, including some that address structured variants of the problem \cite{GS,NPR}.

The present manuscript revisits this classical problem from the viewpoint of modern optimization on Riemannian manifolds \cite{ABG07,AMS08,Boumal,Manopt}, with both theoretical and algorithmic goals. On the side of theory, we exploit the formulation on the flag manifold \(U(n)/U(1)^n\) to study several properties of the corresponding objective function, which is central to both our proposed algorithm and the classical methods. We characterize its global maximum and we prove that, for generic inputs, its local minimizers are finitely many and nondegenerate; moreover, the corresponding local minimizers of the original nearest normal matrix problem are finitely many and have distinct eigenvalues. We also give a complete analysis for \(n=2\) and show that nonglobal local minima can occur for \(n>2\). On the algorithmic side, replacing the classical Jacobi-type coordinate descent with a Riemannian trust-region method substantially improves practical performance and allows us to tackle considerably larger inputs than those previously documented in the literature \cite{Chu,Gabriel,GS,Ruhe}. We compare implementations on \(U(n)\) and on the flag manifold, study the numerical behavior of different possible initializations, and also develop a variant for the nearest real normal matrix problem, which is of special interest in some applications \cite{GS,Higham}.

While this manuscript was being finalized, Bierly posted the preprint \cite{Kyle} on the normal Procrustes problem, whose specialization to the nearest normal problem partially overlaps with the algorithmic part of the present manuscript. The two articles were developed simultaneously and independently, and \cite{Kyle} cites Zhukov's presentation of this work at FoCM 2026. The scopes are nevertheless different: \cite{Kyle} treats the more general normal Procrustes problem, whereas we focus on the nearest normal problem and develop the flag-manifold formulation and the theoretical analysis of its extrema described above.

The structure of the paper is as follows. In Section \ref{sec:riemann}, we show that the problem of finding a normal matrix $N$ nearest to $A$ is equivalent to minimizing a smooth function $f_A$ on a Riemannian manifold; we also derive the Riemannian gradient and Riemannian Hessian of $f_A$, allowing us to implement second-order optimization algorithms. In Section \ref{sec:theory}, we study some related theoretical questions, expanding the knowledge that was previously available in the literature. Section \ref{sec:real} is devoted to the real analogue of the problem, where $A \in \Rnn$ and a real normal $N$ is sought. In Section \ref{sec:numexp}, we describe the resulting practical algorithm and we illustrate its numerical performance. Section \ref{sec:conclusions} concludes the paper.
\section{Nearest normal matrix as a Riemannian optimization problem}\label{sec:riemann}

Let us start by discussing why and how the problem of computing a nearest normal matrix to $A$ is equivalent to minimizing a function over the manifold of unitary matrices (or equivalently over the flag manifold, see below). This connection has been first made in the pioneering work \cite{Causey} and has later played an important role in the treatment of many other authors \cite{Chu,Gabriel79,Gabriel,Higham,Ruhe}. Besides collecting relevant results obtained by our predecessors, below we will add some new insights. In part, we will also follow the framework developed in \cite{NP}.

Given a matrix $X \in \Cnn$, denote by $\mathcal{N}(X):=X-\diag(X)$ the orthogonal (in the Frobenius inner product $\langle X, Y \rangle = \Re \tr(Y^*X)$) projection of $X$ onto the set of matrices with zero diagonal. In other words, $\mathcal{N}(X)=X-\diag(X)$, where here and throughout $\diag(X)$ denotes the diagonal matrix whose diagonal elements coincide with those of $X$. (In particular, diagonal matrices and matrices with zero diagonal are characterized by, respectively, $\diag(X)=X$ and $\diag(X)=0$.) Recall also that the Frobenius norm $\| A \|_F = \sqrt{\langle A,A \rangle}$ is unitarily invariant, since for every pair of unitary matrices $U,V$ we have by the cyclic property of the trace
\[ \| U A V \|_F^2 = \Re \tr (V^* A^* U^* U A V) = \Re \tr(A^*A) = \| A \|_F^2. \]

Observe now that for any normal matrix the spectral decomposition $X=Q \Lambda Q^*$ holds, where $Q \in U(n)$ is unitary. Hence,
\begin{equation}\label{eq:Riemannmin}
 \min_{X \ \mathrm{normal}} \|A-X\|_F^2 = \min_{Q \in U(n)} \min_{\Lambda \ \mathrm{diagonal} } \| A - Q \Lambda Q^* \|_F^2 = \min_{Q \in U(n)} \| \mathcal{N}(Q^*AQ) \|_F^2.    
\end{equation}
The right hand side of \eqref{eq:Riemannmin} constructs an objective function 
\begin{equation}
\label{def:f}
f_A(Q) := \| \mathcal{S}(Q) \|_F^2, \qquad  \mathcal{S}(Q): = \mathcal{N}(Q^*AQ).  
\end{equation}
 The function $f_A(Q)$ in \eqref{def:f} is defined on the real embedded manifold $U(n) \subsetneq \Cnn \cong \R^{2 n^2}$ of $n \times n$ unitary matrices, and it is in fact a smooth function on $U(n)$. In Proposition \ref{prop:symmetries}, we note some symmetries of $f_A(Q)$.

\begin{proposition}\label{prop:symmetries}
  Given $A \in \Cnn$, let $f_A(Q)=\| \mathcal{S}(Q) \|_F^2$ where $\mathcal{S}$ is defined as above. Then, for every diagonal unitary matrix $D$ and for every permutation matrix $P$, we have $f_A(Q)=f_A(QD)=f_A(QP)$ for all $Q \in U(n)$. 
\end{proposition}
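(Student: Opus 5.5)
The plan is to reduce both invariances to a single fact: the off-diagonal projection $\mathcal{N}$ is equivariant under conjugation by a monomial unitary, and conjugation by such a matrix preserves the Frobenius norm. Let $M$ be either $D$ or $P$; in both cases $M$ is unitary. Write $B := Q^*AQ$. Then $(QM)^*A(QM) = M^*BM$, so
\[ \mathcal{S}(QM) = \mathcal{N}(M^*BM). \]
It therefore suffices to show that $\mathcal{N}(M^*BM) = M^*\mathcal{N}(B)M$. Granting this, the unitary invariance of $\|\cdot\|_F$ recalled above gives $f_A(QM) = \|M^*\mathcal{N}(B)M\|_F^2 = \|\mathcal{N}(B)\|_F^2 = f_A(Q)$.

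For the equivariance, since $\mathcal{N}(X) = X - \diag(X)$ and conjugation is linear, we only need $\diag(M^*BM) = M^*\diag(B)M$. We check this entrywise in each case.

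For $D = \diag(d_1,\dots,d_n)$ with $|d_i|=1$, we have $(D^*BD)_{ij} = \overline{d_i}\, b_{ij}\, d_j$. On the diagonal this equals $|d_i|^2 b_{ii} = b_{ii}$, while off-diagonal entries are only rescaled. Hence $\diag(D^*BD) = \diag(B) = D^*\diag(B)D$, the last equality because diagonal matrices commute.

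For a permutation matrix $P$ associated with $\sigma$, we have $(P^*BP)_{ij} = b_{\sigma(i)\sigma(j)}$. Since $\sigma$ is a bijection, $i=j$ holds if and only if $\sigma(i)=\sigma(j)$, so conjugation maps diagonal entries to diagonal entries and off-diagonal entries to off-diagonal entries. This yields $\diag(P^*BP) = P^*\diag(B)P$.

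None of these steps is a genuine obstacle. The only point requiring care is fixing the index convention for $P$, which can be sidestepped by noting that the conclusion holds whichever of $\sigma$ or $\sigma^{-1}$ appears.
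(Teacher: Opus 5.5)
Your proof is correct and follows essentially the same idea as the paper, which observes that conjugating $Q^*AQ$ by $D$ changes its off-diagonal entries only by phases, and conjugating by $P$ only moves them to other off-diagonal positions. Writing this as the equivariance $\mathcal{N}(M^*BM)=M^*\mathcal{N}(B)M$, followed by unitary invariance of $\|\cdot\|_F$, is just a more formal packaging of the same entrywise observation.
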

\begin{proof}
 The off-diagonal elements of $D^*(Q^* A Q)D$ and $Q^* A Q$ are equal up to a phase, and hence $f_A$ takes the same value on $Q$ and $QD$. Similarly, $P^T (Q^* A Q)P$ has the same off-diagonal elements as $Q^* A Q$, except that they appear in different positions in the respective matrices.
\end{proof}
The quotient manifold $\mathcal{F}_n:=U(n)/U(1)^n$ is a smooth manifold itself, and it has sometimes been called the \emph{flag manifold}, because it parametrizes the space of the ``complete flags" of $\C^n$, i.e., nested sequences $\{0\} = V_0 \subsetneq V_1 \cdots \subsetneq V_{n-1} \subsetneq V_n = \C^n$ where for $i=0,\dots,n$ the subspace $V_i$ has dimension $i$ over $\C$. Proposition \ref{prop:symmetries} implies that we may as well minimize $f_A(Q)$ on the flag manifold, which has lower dimension than $U(n)$. Although formally an element of $\mathcal{F}_n$ is an equivalence class, one can fix a representative, e.g., a unitary matrix $Q$ whose diagonal elements are nonnegative real numbers.

If a minimizer for $f_A(Q)$ is found on $\mathcal{F}_n$ (or on $U(n)$), it is easy to reconstruct from it a normal matrix nearest to the input $A$. We record this fact in Theorem \ref{prop:equivalent}, which is an analogue for the nearest normal matrix problem of \cite[Theorem 3.2]{NP}, which deals with the nearest stable matrix problem. We note in passing that, due to a subtlety, the local statements in \cite[Theorem 3.2]{NP} are not fully correct as stated; a correction will appear in \cite{NPacta}.

\begin{theorem}\label{prop:equivalent}
Let $A \in \Cnn$ and, for $Q \in U(n)$, consider the normal matrix $N(Q) := Q \diag(Q^* A Q) Q^*$.
\begin{enumerate}
\item $[Q_0]$ is a global minimizer of $f_A$ on $\mathcal{F}_n$ if and only if
$N(Q_0)$ is a global minimizer of $\|A-X\|_F^2$ over the normal matrices.
\item Let $N_0$ be normal and let
$\mathcal{R} := \{ Q \in U(n) : Q^* N_0 Q \text{ is diagonal}\}$.
Then $N_0$ is a local minimizer of $\|A-X\|_F^2$ if and only if, for every
$Q \in \mathcal{R}$, one has $N(Q) = N_0$ and $[Q]$ is a local minimizer of
$f_A$ on $\mathcal{F}_n$.
\end{enumerate}
Moreover, the same correspondences hold for the minimizers of $f_A(Q)$ on $U(n)$.
\end{theorem}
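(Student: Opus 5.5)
The whole argument rests on one orthogonal splitting. For any $Q\in U(n)$ and any diagonal $\Lambda$, unitary invariance of the Frobenius norm and the orthogonality of $\NN(\cdot)$ and $\diag(\cdot)$ give
\[
\|A-Q\Lambda Q^*\|_F^2=\|Q^*AQ-\Lambda\|_F^2=f_A(Q)+\|\diag(Q^*AQ)-\Lambda\|_F^2 .
\]
This yields $\|A-Q\Lambda Q^*\|_F^2\ge f_A(Q)$, with equality if and only if $\Lambda=\diag(Q^*AQ)$, i.e.\ if and only if $Q\Lambda Q^*=N(Q)$. In particular $f_A(Q)=\|A-N(Q)\|_F^2$.

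For item 1, every normal $X$ can be written as $Q\Lambda Q^*$, so by the splitting $\|A-X\|_F^2\ge f_A(Q)$. Conversely, every value $f_A(Q)$ is attained by the normal matrix $N(Q)$. Hence $\inf_X\|A-X\|_F^2=\min f_A$. It follows that $[Q_0]$ attains $\min f_A$ if and only if $\|A-N(Q_0)\|_F^2=\min_X\|A-X\|_F^2$, which is exactly the claim. By Proposition~\ref{prop:symmetries}, $f_A$ is constant on the fibres of $U(n)\to\mathcal F_n$, so the same statement holds on $U(n)$.

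For item 2, ($\Rightarrow$), let $N_0$ be a local minimizer and fix $Q\in\mathcal R$, so that $N_0=Q\Lambda_0Q^*$.
\begin{enumerate}
\item First we show $N(Q)=N_0$. Consider the normal matrices $Q\Lambda_tQ^*$ with $\Lambda_t=\Lambda_0+t(\diag(Q^*AQ)-\Lambda_0)$. By the splitting their squared distance to $A$ equals $f_A(Q)+(1-t)^2\|\diag(Q^*AQ)-\Lambda_0\|_F^2$. This is strictly smaller than the value at $t=0$ for small $t>0$ unless $\Lambda_0=\diag(Q^*AQ)$. Local minimality of $N_0$ therefore forces $\Lambda_0=\diag(Q^*AQ)$, i.e.\ $N(Q)=N_0$.
\item Next, the map $Q'\mapsto N(Q')$ is continuous, so $N(Q')$ lies in any prescribed neighbourhood of $N_0$ when $Q'$ is near $Q$.
\item Consequently $f_A(Q')=\|A-N(Q')\|_F^2\ge\|A-N_0\|_F^2=f_A(Q)$ for $Q'$ near $Q$, so $Q$ is a local minimizer on $U(n)$.
\item Because the quotient map is continuous, open and $f_A$ is fibre-constant, $[Q]$ is a local minimizer on $\mathcal F_n$. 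The same fibre-constancy gives the equivalence of local minimality on $U(n)$ and on $\mathcal F_n$ throughout.
\end{enumerate}

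For item 2, ($\Leftarrow$), argue by contradiction. Suppose there are normal $X_k\to N_0$ with $\|A-X_k\|_F<\|A-N_0\|_F$.
\begin{enumerate}
\item Write $X_k=Q_k\Lambda_kQ_k^*$. By compactness of $U(n)$, pass to a subsequence with $Q_k\to Q$.
\item Then $\Lambda_k=Q_k^*X_kQ_k\to Q^*N_0Q$. A limit of diagonal matrices is diagonal, so $Q\in\mathcal R$, and by hypothesis $N(Q)=N_0$.
\item The splitting now gives $f_A(Q_k)\le\|A-X_k\|_F^2<\|A-N_0\|_F^2=f_A(Q)$.
\item Since $Q_k\to Q$ (and $[Q_k]\to[Q]$), this contradicts local minimality of $Q$, respectively of $[Q]$.
\end{enumerate}

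The main obstacle is this converse direction. When $N_0$ has repeated eigenvalues, its eigenvector matrix is not unique and the eigenvectors of nearby normal matrices need not converge to any prescribed $Q$. This is why the hypothesis must quantify over all $Q\in\mathcal R$, and it is handled by extracting a convergent subsequence via compactness and checking that the limit automatically lies in $\mathcal R$.
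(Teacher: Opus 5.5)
Your proposal is correct and follows essentially the same route as the paper. It uses the same orthogonal splitting $\|A-Q\Lambda Q^*\|_F^2=f_A(Q)+\|\diag(Q^*AQ)-\Lambda\|_F^2$, the same segment perturbation of the eigenvalues to force $N(Q)=N_0$ (your $t$ plays the role of the paper's $1/k$), and the same continuity-of-$N$ and compactness/subsequence arguments for the two directions of item 2. The remaining differences are only cosmetic: you argue by contradiction instead of by contrapositive, and you justify the transfer between $U(n)$ and $\mathcal{F}_n$ explicitly via the open quotient map.
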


\begin{proof}
By Proposition \ref{prop:symmetries}, it suffices to prove the statements for minimizers on $U(n)$ and the analogues on $\mathcal{F}_n$ follow. If $U \in U(n)$ is diagonal then $\diag(U X U^*)=\diag(X)$ for all $X$. Thus, $N(QU) = N(Q)$, i.e., $N$ only depends on $[Q]$ and is well defined as a function on $\mathcal{F}_n$.
Moreover, for every $Q \in U(n)$ and every diagonal $D \in \Cnn$,
\begin{equation}\label{eq:split}
 \|A - QDQ^*\|_F^2 = \|\mathcal{N}(Q^*AQ)\|_F^2 + \|\diag(Q^*AQ) - D\|_F^2
 \geq f_A(Q),
\end{equation}
with equality if and only if $\diag(Q^*AQ)=D$ if and only if $N(Q)=QDQ^*$.

\begin{enumerate}
\item It suffices to observe that \eqref{eq:split} yields $\|A-X\|_F^2 \geq f_A(Q) \geq \min f_A$ and $\|A - N(Q)\|_F^2 = f_A(Q)$ for every $Q$. Hence the two global minima
coincide and the minimizers correspond.
\item Assume first $N_0$ is a local minimizer, and fix $Q_0 \in \mathcal{R}$ and
$D_0 := Q_0^* N_0 Q_0$. For all $k \geq 1$ construct the sequence
\[ D_k = \frac{k-1}{k} D_0 + \frac1k \diag(Q_0^*AQ_0);   \]
clearly $D_k$ is diagonal, and
$N_k := Q_0 D_k Q_0^*$ is normal with $N_k \to N_0$. Since $N_0$ is a local minimizer, for sufficiently large $k$ it holds $\| A - N_0 \|_F^2 \leq \| A - N_k \|_F^2$. Subtracting $f_A(Q_0)$ from both terms and by
\eqref{eq:split}, this implies 
\[ \|\diag(Q_0^*AQ_0) - D_0\|_F^2 \leq \left(\frac{k-1}{k}\right)^2 \|\diag(Q_0^*AQ_0) - D_0\|_F^2, \]
forcing
$\diag(Q_0^*AQ_0) = D_0$ and hence $N(Q_0) = N_0$. Observe now that the function $N(Q)$ is polynomial in the elements of $Q$, hence continuous. Therefore, whenever $\| Q - Q_0 \|_F^2$ is small enough, $f_A(Q) = \|A-N(Q)\|_F^2 \geq \|A-N_0\|_F^2 = f_A(Q_0)$, proving that $Q_0$ is a local minimizer.

Conversely, assume $N_0$ is not a local minimizer. Then there is a sequence $(N_k)_k$ of normal
matrices with $N_k \to N_0$ and $\|A-N_k\|_F^2 < \|A-N_0\|_F^2$. For all $k$, write
$N_k = Q_k D_k Q_k^*$ with $Q_k \in U(n)$ and $D_k$ diagonal. Since $U(n)$ is
compact, we may assume (up to picking a subsequence of $N_k$) that $Q_k \to Q_0 \in U(n)$. Therefore, 
$D_k = Q_k^* N_k Q_k \to Q_0^* N_0 Q_0$, implying $Q_0 \in \mathcal{R}$. If $N(Q_0) \neq N_0$ there is nothing else to prove. Otherwise,
$f_A(Q_0) = \|A - N_0\|_F^2$, while \eqref{eq:split} yields
\[ f_A(Q_k) \leq \|A - N_k\|_F^2 < \|A-N_0\|_F^2 = f_A(Q_0) \]
for all $k$; hence $Q_0$ is not a local
minimizer of $f_A$ on $U(n)$.
\end{enumerate}
\end{proof}

\begin{remark}
    The other symmetry of $f_A$ discussed in Proposition \ref{prop:symmetries} is less advantageous algorithmically, because taking a quotient by the symmetric group $S_n$ does not reduce dimensionality. We note however that, by a logic similar to that in the proof of Theorem \ref{prop:equivalent}, the change of variable $Q \mapsto QP$, where $P$ is a permutation matrix, also does not change the corresponding normal matrix $N$. 
\end{remark}

\begin{remark}\label{rem:nonder}
    When $N_0$ has distinct eigenvalues and $N_0=N(Q_0),$, it is sufficient that one $Q_0 \in \mathcal{R}$ is a local minimum of $f_A(Q)$ to guarantee that $N_0$ is a local minimum of the distance to $A$. This is generally not true for a derogatory $N_0$. Consider for example $\displaystyle A = \begin{bmatrix}
        0&0&1\\
        0&0&1\\
        -1&-1&\sqrt{7}
    \end{bmatrix}$ and $\displaystyle N_0 = \begin{bmatrix}
        0&0&0\\
        0&0&0\\
        0&0&\sqrt{7}
    \end{bmatrix}$. Using \eqref{eq:fgrad} below, it  is easy to verify that $\displaystyle Q=\begin{bmatrix}
        U & 0\\
        0 & 1
    \end{bmatrix}$ is a stationary point of $f_A(Q)$ for every $U \in U(2)$. We can also compute the Hessian at a tangent direction $Q \Omega$ by \eqref{eq:fhess_Un} below. By parametrizing the skew-Hermitian matrix $\Omega$ one can verify that $Q_0=I$ is a local minimum, but $\displaystyle Q_1=\frac{1}{\sqrt{2}}\begin{bmatrix}
        1&1&0\\
        1&-1&0\\
        0&0&\sqrt{2}
    \end{bmatrix}$ is not because $f_A(Q)$ decreases along the tangent direction $Q_1 \Omega_1, \Omega_1:= \begin{bmatrix}
        0&0&i\\
        0&0&0\\
        i&0&0
    \end{bmatrix}$. Hence
$N(Q_1e^{t\Omega_1})\to N_0$ are normal matrices closer to $A$ than $N_0$ and
$N_0$ is not a local minimizer.
\end{remark}

Theorem \ref{prop:equivalent} yields the main idea of this paper: We can minimize $f_A(Q)$ on a Riemannian manifold  by using Riemannian optimization algorithms \cite{ABG07,AMS08,Boumal}, and this is tantamount to finding a normal matrix nearest to the input matrix $A$. Note that the manifold of choice can be either $U(n)$ or $\mathcal{F}_n$. While $n^2=\dim_\R U(n) > \dim_\R \mathcal{F}_n = n^2-n$, optimizing on the flag manifold does not necessarily yield a computational advantage. In practice, this depends on the balance between the reduced dimension and the overhead caused by explicit computations of additional projections. See Section \ref{sec:implement} for a more detailed analysis and a numerical comparison.

\begin{remark}
Recently, other algorithms for matrix nearness problems based on optimization over unitary matrices have been studied \cite{DNN,NN,NP}.  The objective function for the complex case in \cite{NP} has the same phase symmetries as \eqref{def:f}, and hence the corresponding optimization task could also be mathematically formulated on $\mathcal{F}_n$. Similar comments also apply to \cite{DNN,NN}, with the additional caveat that the objective functions in those papers are not everywhere differentiable, which could in principle result in a different relative numerical behavior on $U(n)$ and $\mathcal{F}_n$.
\end{remark}

For the nearest normal matrix problem, the approach to minimize a function of a unitary matrix on $U(n)$ was already proposed, either explicitly or implicitly, in \cite{Chu,Gabriel,Ruhe}. However, \cite{Gabriel,Ruhe} apply an ad hoc Jacobi method, which can be interpreted as a peculiar kind of coordinate descent on $U(n)$, where each ``coordinate" corresponds to  those Givens rotations that act on a fixed pair of rows or columns. Perhaps unsurprisingly, the resulting algorithms inherit the known limitations and potential numerical pitfalls of coordinate descent methods. On the other hand, the focus on \cite{Chu} is not exactly on the nearest normal matrix problem, but rather on the related (and easier) problem of finding a nearest real normal matrix with prescribed spectrum; this implies that many of the results derived there are not directly applicable to the goals of our paper. In addition, \cite{Chu,Gabriel,Ruhe} were all written in an era preceding the most recent developments on both  theory \cite{ABG07,AMS08, Boumal} and computation \cite{Manopt} of modern optimization on manifolds. To summarize, even if the basic idea can certainly be traced back to the ample literature on this problem, we believe that, in the context of the nearest normal matrix problem, the tools of Riemannian optimization were previously not fully exploited. 

In the next subsections, we derive the gradient and Hessian of $f_A(Q)$, which are crucial ingredients for any  Riemannian optimization algorithm.

\subsection{Euclidean and Riemannian gradients}
It is convenient to start with Lemma \ref{lm:costgradient}, which is more general than what we need, and it recovers for instance \cite[Subsection 6.2]{NP} as a special case. Here and below recall that, for every pair $X,Y \in \Cnn$, their commutator is denoted by $[X,Y]:=XY-YX$.
\begin{lemma}
     \label{lm:costgradient}
         Let $A \in \Cnn$, let $h\colon U(n)\to \R$ be a function of the form $h(Q) = \|\mathcal{P}(Q^*AQ)\|^2_F$, where $\mathcal{P}$ is a linear orthogonal (in the Frobenius inner product) projection $\mathcal{P}$ in a neighborhood of $Q\in U(n).$ Then, the Euclidean gradient of $h$ at $Q$ is 
         \begin{equation}\label{eq:Egrad}
             \nabla h(Q) = 2(A^*QS + AQS^*),
         \end{equation}
         and its Riemannian gradient at $Q$ is
         \begin{equation}\label{eq:Rgrad}
             \grad h(Q) = 2Q\skeww([B^*, S]),
         \end{equation}
         where $B = Q^*AQ,\, S = \mathcal{P}(B),$ and $\skeww(X) = \frac{1}{2}(X - X^*).$

     \end{lemma}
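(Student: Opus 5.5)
We compute the directional derivative of $h$ along an arbitrary $E \in \Cnn$, read off the Euclidean gradient from the real inner product $\langle X,Y\rangle = \Re\tr(Y^*X)$, and then project onto the tangent space of $U(n)$.

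\textbf{Euclidean gradient.} Fix $Q$ and write $B(Q)=Q^*AQ$. For a perturbation $Q+tE$, the derivative of $B$ is
\[
\dot B = E^*AQ + Q^*AE .
\]
Since $\mathcal{P}$ is linear and, near $Q$, a fixed orthogonal projection, it is self-adjoint and idempotent. Hence
\[
\mathrm{D}h(Q)[E] = 2\langle \mathcal{P}(B), \mathcal{P}(\dot B)\rangle = 2\langle S, \dot B\rangle .
\]
We then move $E$ to one side using the cyclic property of the trace and the invariance of $\Re\tr$ under conjugate transposition:
\begin{align*}
\langle S, Q^*AE\rangle &= \Re\tr(S^*Q^*AE) = \langle E, A^*QS\rangle,\\
\langle S, E^*AQ\rangle &= \Re\tr(S^*E^*AQ) = \Re\tr(E^*AQS^*) = \langle E, AQS^*\rangle .
\end{align*}
Summing the two gives $\mathrm{D}h(Q)[E] = \langle E, 2(A^*QS + AQS^*)\rangle$, which is \eqref{eq:Egrad}. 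The ``neighborhood'' hypothesis only serves to ensure that $\mathcal{P}$ does not vary with $Q$, so that differentiation passes through it.

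\textbf{Riemannian gradient.} Since $U(n)$ is an embedded submanifold with the induced metric, $\grad h(Q)$ is the orthogonal projection of $\nabla h(Q)$ onto the tangent space $T_QU(n) = \{Q\Omega : \Omega^* = -\Omega\}$. This projection is
\[
\proj_Q(Z) = Q\,\skeww(Q^*Z).
\]
It has the right range, and its residual $Q(Q^*Z - \skeww(Q^*Z))$ is $Q$ times a Hermitian matrix, which is orthogonal to every $Q\Omega$ because $\Re\tr$ of (Hermitian)$\times$(skew-Hermitian) vanishes.

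Applying the projection to \eqref{eq:Egrad}:
\[
Q^*\nabla h(Q) = 2(Q^*A^*QS + Q^*AQS^*) = 2(B^*S + BS^*).
\]
Using $\skeww(X) = -\skeww(X^*)$, we have $\skeww(BS^*) = -\skeww(SB^*)$. Therefore
\[
\skeww(B^*S + BS^*) = \skeww(B^*S - SB^*) = \skeww([B^*,S]),
\]
which yields \eqref{eq:Rgrad}.

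The main point of care is the bookkeeping of conjugate transposes and the real part in the inner product; the second adjoint identity and the final skew-symmetrization identity are where sign or transpose errors are most likely. The rest is routine.
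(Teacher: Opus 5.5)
Your proposal is correct and follows essentially the same route as the paper. Both arguments differentiate $\|\mathcal{P}(B)\|_F^2$ using self-adjointness and idempotence of $\mathcal{P}$ to get $2\Re\tr(S^*\,dB)$, read off $\nabla h$, and then project with $Q\skeww(Q^*\,\cdot\,)$. The only cosmetic difference is in the last step: you invoke $\skeww(BS^*) = -\skeww(SB^*)$, whereas the paper expands $\frac12(Q^*\nabla h - \nabla h^* Q)$ term by term, and the two are equivalent.
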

    \begin{proof}
    Using that $\mathcal{P}$ is linear and that (being an orthogonal projection) it satisfies $\mathcal{P}^2=\mathcal{P}$ and $\langle \mathcal{P}(X),Y \rangle = \langle X, \mathcal{P}(Y) \rangle$, we have
    \begin{equation*}
    \begin{split}
    dh &= d\tr(\mathcal{P}(B)\mathcal{P}(B)^*)= \tr(d\mathcal{P}(B)S^* + Sd\mathcal{P}(B)^*) = 2\Re\tr(S^*dB) \\
    &= 2\Re\tr(S^*(dQ^*AQ + Q^*AdQ))= 2\Re\tr((AQS^* + A^*QS)dQ^*).
    \end{split}
    \end{equation*}

    Thus, the Euclidean gradient of $h$ equals to 
    $$
    \nabla h(Q) = 2(A^*QS + AQS^*) = 2 Q(B^*S + BS^*).
    $$

    From that, the Riemannian gradient can be calculated by projecting $\nabla h(Q)$ onto the tangent space $\mathcal{T}_Q U(n)$.
    \begin{equation*}
         \begin{split}
        \grad h(Q) &= \proj_{\mathcal{T}_Q U(n)}\nabla h(Q) = Q\skeww(Q^*\nabla h(Q)) \\
        &= \frac{1}{2}Q(Q^*\nabla h(Q) - \nabla h(Q)^*Q) = A^*QS + AQS^* - QS^*Q^*AQ - QSQ^*A^*Q \\
        &= Q(B^*S + BS^* - S^*B - SB^*)= 2Q\skeww(B^*S - SB^*) = 2Q\skeww([B^*, S]).
    \end{split} 
    \end{equation*}
    \end{proof}

\begin{corollary}\label{cor:Egradf}
    Let $f_A : U(n) \rightarrow \R$ be defined as in \eqref{def:f}. Then 
    \begin{equation}
        \label{eq:fgrad}
        \nabla f_A(Q) = 2 Q(B^*S + BS^*), \qquad \grad f_A(Q) = 2Q\skeww([B^*, S]),
    \end{equation}
    for every $Q\in U(n)$ and having defined $B=Q^*A Q$ and $S=\NN(B)$.
\end{corollary}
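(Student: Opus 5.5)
The plan is to obtain Corollary \ref{cor:Egradf} as the special case $\mathcal{P}=\NN$ of Lemma \ref{lm:costgradient}. The only thing to check is that $\NN$ satisfies the hypotheses of the lemma: it must be a linear map on $\Cnn$ that is an orthogonal projection for the real inner product $\langle X,Y\rangle=\Re\tr(Y^*X)$.

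\emph{Linearity.} The map $X\mapsto \diag(X)$ is linear, so $\NN(X)=X-\diag(X)$ is linear as well.

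\emph{Idempotence.} The matrix $\NN(X)$ has zero diagonal, so $\diag(\NN(X))=0$. Hence $\NN^2=\NN$.

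\emph{Self-adjointness.} Write $\langle \NN(X),Y\rangle$ entrywise as $\Re\sum_{i\neq j}\overline{y_{ij}}\,x_{ij}$. This expression is symmetric in the roles of $X$ and $Y$, so it also equals $\langle X,\NN(Y)\rangle$.

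These three properties are exactly what the proof of Lemma \ref{lm:costgradient} uses. Moreover $\NN$ does not depend on $Q$, so the neighborhood condition in the lemma holds trivially.

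Substituting $h=f_A$ and $S=\NN(B)$ with $B=Q^*AQ$ into \eqref{eq:Egrad} gives $\nabla f_A(Q)=2(A^*QS+AQS^*)$. Using $A=QBQ^*$ and $A^*=QB^*Q^*$, this becomes $\nabla f_A(Q)=2Q(B^*S+BS^*)$, which is the first formula in \eqref{eq:fgrad}. The second formula is \eqref{eq:Rgrad} verbatim.

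No real obstacle is expected. The only step that needs attention is the self-adjointness of $\NN$ under the real part of the trace inner product, and that is immediate from the entrywise expansion above.
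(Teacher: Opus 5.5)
Your proposal is correct and follows the same route as the paper: the paper's proof simply observes that $f_A$ satisfies the hypotheses of Lemma \ref{lm:costgradient} with $\mathcal{P}=\NN$ and reads off \eqref{eq:Egrad} and \eqref{eq:Rgrad}. You add the details the paper leaves implicit, namely checking that $\NN$ is linear, idempotent and self-adjoint, and rewriting the Euclidean gradient using $A=QBQ^*$.
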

\begin{proof}
    Since $f$ satisfies the assumptions of Lemma \ref{lm:costgradient}, the statement follows directly by \eqref{eq:Egrad} and \eqref{eq:Rgrad}.
\end{proof}

\begin{remark}\label{rem:gradflag}
  By Proposition \ref{prop:symmetries},  $f(Q)$ is constant on the orbits of $U(1)^n$. Hence, by the general theory of optimization on manifolds \cite{AMS08, Boumal}, its Riemannian gradient on the flag manifold $\mathcal{F}_n$ coincides with its Riemannian gradient on $U(n)$. We conclude that Corollary \ref{cor:Egradf} is also applicable to algorithms that optimize $f(Q)$ on $\mathcal{F}_n$. A more ad hoc argument that leads to the same conclusion consists of checking that $\diag(\skeww([B^*,S]))=0$, which implies that $\grad f_A(Q) \in \mathcal{T}_{[Q]}\mathcal{F}_n$.
\end{remark}

\begin{remark}\label{rem:stationary}
    From Corollary \ref{cor:Egradf}, we see that $Q$ is a stationary point of $f$ if and only if $[B^*,\diag(B)]$ is Hermitian where $B=Q^*AQ$. In particular \[  B_{ij}\overline{(B_{ii} - B_{jj})} = \overline{B_{ji}}(B_{jj} - B_{ii})\]
    holds for all $i,j$. When $\diag(B)$ has distinct diagonal elements, this implies in turn $|B_{ij}|=|B_{ji}|$.
\end{remark}

\subsection{Euclidean and Riemannian Hessians}

We now turn to computing the Hessian of $f_A$. Let us start with the Euclidean version of the second derivatives.

\begin{proposition}
    Let $f_A$ be defined as in \eqref{def:f}. Its Euclidean Hessian at $Q \in U(n)$, applied to the direction $Z \in T_QU(n)$ is
    \[ \nabla^2 f_A(Q)[Z] = 2A^*(ZS+Q\NN(dB[Z]))+2A(ZS^*+Q\NN(dB[Z]^*)),  \]
    where $S=\NN(Q^*AQ)$ and $dB[Z]:=Q^*AZ+Z^*AQ$.
\end{proposition}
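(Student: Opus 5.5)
The plan is to differentiate the Euclidean gradient of Corollary \ref{cor:Egradf} once more along a direction $Z$. First I will check that the formula $\nabla f_A(Q)=2(A^*QS+AQS^*)$ holds on an open set of $\Cnn$, not only on $U(n)$. Indeed, $f_A(Q)=\|\NN(Q^*AQ)\|_F^2$ makes sense for every $Q\in\Cnn$, and the computation of $dh$ in the proof of Lemma \ref{lm:costgradient} never used $Q^*Q=I$. It only used linearity of $\NN$, the identity $\NN^2=\NN$, self-adjointness of $\NN$ in the Frobenius inner product, and $B=Q^*AQ$. So the map $G(Q):=2(A^*QS(Q)+AQS(Q)^*)$, with $S(Q)=\NN(Q^*AQ)$, is the Euclidean gradient of the natural smooth extension of $f_A$ to all of $\Cnn$. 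The Euclidean Hessian applied to $Z$ is then, by definition, the directional derivative $\nabla^2 f_A(Q)[Z]=DG(Q)[Z]$.

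Next I will compute $DG(Q)[Z]$ by the product rule, viewing $Q$ and $S(Q)$ as the two varying factors:
\[ DG(Q)[Z] = 2\bigl(A^*ZS + A^*Q\,dS[Z] + AZS^* + AQ\,(dS[Z])^*\bigr). \]
Since $B(Q)=Q^*AQ$ is real-bilinear in $(Q,Q^*)$, we have $dB[Z]=Z^*AQ+Q^*AZ$, which is exactly the expression in the statement. Because $\NN$ is linear, $dS[Z]=\NN(dB[Z])$.

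It then remains to identify $(dS[Z])^*$ with $\NN(dB[Z]^*)$. This holds because $\NN$ commutes with conjugate transposition: $\diag(X^*)=\diag(X)^*$, so $\NN(X)^*=X^*-\diag(X)^*=\NN(X^*)$. Substituting and regrouping the terms multiplied by $A^*$ and by $A$ gives the stated formula.

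The only point needing care, and the step I consider the main obstacle, is the first one. One must be sure that "Euclidean Hessian" refers to the derivative of the ambient gradient field of a smooth extension of $f_A$, and that the gradient formula from Lemma \ref{lm:costgradient} is valid off the manifold. Once this is settled, the restriction $Z\in T_QU(n)$ plays no role in the computation: the formula holds for arbitrary $Z\in\Cnn$. The tangency assumption matters only later, when the Euclidean Hessian is converted into the Riemannian one via projection and the Weingarten correction term.
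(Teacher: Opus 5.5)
Your proof is correct and follows the paper's argument. The paper likewise differentiates $\nabla f_A(Q)=2(A^*QS+AQS^*)$ by the product rule, substitutes $dS[Z]=\NN(dB[Z])$, and identifies $\NN(dB[Z])^*$ with $\NN(dB[Z]^*)$. Your additional check, that this is the gradient of the natural extension of $f_A$ to all of $\Cnn$, justifies a step the paper leaves implicit; it matters because the other form $2Q(B^*S+BS^*)$ coincides with it on $U(n)$ but not in general off it.
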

\begin{proof}
 Differentiating $\nabla f_A$ in \eqref{eq:fgrad},
    \begin{equation*}
         \begin{split}
    \frac{1}{2}\nabla^2 f_A(Q)[Z] &= AZS^* + AQdS[Z]^* + A^*ZS + A^*QdS[Z]\\
    &= AZS^* + AQ\NN(dB[Z])^* + A^*ZS + A^*Q\NN(dB[Z]).
    \end{split}
    \end{equation*}
\end{proof}

To obtain the Riemannian Hessian, let us first recall a general result on the covariant derivative of a vector field on a Riemannian manifold \cite{AMS08}.

\begin{lemma}[{\cite[Section 5.3.3]{AMS08}}]
    \label{lm:rgrad2rhess}
    Let $\mathcal{M}$ be a Riemannian submanifold of $\R^N$. Let $h\colon \mathcal{M} \to \R$ be a smooth function and let $G$ be a smooth vector field on $\R^N$ that extends $\grad h$ in a neighborhood of $X \in \mathcal{M}.$ Then, the Riemannian Hessian of $h$ at $X$ in the direction $Z\in \mathcal{T}_X\mathcal{M}$ is given by
    \begin{equation*}
        \hess h(X)[Z] = \proj_{\mathcal{T}_X\mathcal{M}}(\delta G(X)[Z]),
    \end{equation*}
    where $\delta G(X)[Z]$ denotes the standard Euclidean directional derivative of $G$ at $X$ in the direction $Z.$
\end{lemma}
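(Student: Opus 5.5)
The plan is the standard Levi-Civita argument for embedded submanifolds, using the Riemannian connection induced by the ambient Euclidean one. Since $\mathcal{M}$ is a Riemannian submanifold of $\R^N$ with the induced metric, the Levi-Civita connection $\nabla^{\mathcal{M}}$ of $\mathcal{M}$ is characterized as the tangential projection of the ambient flat connection. Concretely, for vector fields $\xi,\eta$ on $\mathcal{M}$ and smooth extensions $\bar\xi,\bar\eta$ to a neighborhood in $\R^N$,
\[ \nabla^{\mathcal{M}}_{\eta}\xi\,(X) = \proj_{\mathcal{T}_X\mathcal{M}}\bigl(\delta\bar\xi(X)[\eta(X)]\bigr). \]

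I would justify this projection formula first. One checks directly that the right-hand side has the following properties. It does not depend on the choice of extension, because $\delta\bar\xi(X)[Z]$ for $Z$ tangent only involves values of $\bar\xi$ along a curve in $\mathcal{M}$. It is $C^\infty$-linear in $\eta$ and satisfies the Leibniz rule in $\xi$. It is torsion-free, since the Euclidean Lie bracket of tangent fields is tangent and equals the difference of the directional derivatives. It is metric-compatible, because differentiating $\langle\bar\xi,\bar\zeta\rangle$ and noting that $\xi,\zeta$ are tangent kills the normal components. By uniqueness of the Levi-Civita connection, the formula holds.

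With the connection identified, the Riemannian Hessian is by definition $\hess h(X)[Z]=\nabla^{\mathcal{M}}_Z\grad h$. Taking $\xi=\grad h$ and the given extension $\bar\xi=G$, the formula gives exactly $\proj_{\mathcal{T}_X\mathcal{M}}(\delta G(X)[Z])$. The independence of the choice of $G$ was settled in the step above, so any smooth extension, as in the statement, is admissible.

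The only real obstacle is the identification of the induced connection with the projected Euclidean derivative, specifically the torsion and metric-compatibility checks. These are routine, and I would simply cite \cite[Section 5.3.3]{AMS08} (Gauss formula) for them rather than redo them.
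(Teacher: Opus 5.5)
Your proposal is correct and matches the paper's approach: the paper gives no proof of its own and simply cites \cite[Section 5.3.3]{AMS08}. That reference uses exactly your argument, namely that the Levi-Civita connection of a Riemannian submanifold is the tangential projection of the Euclidean directional derivative (independent of the extension), and that the Hessian is the covariant derivative of $\grad h$.
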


Unlike what happens for the Riemannian gradient, the Riemannian Hessian depends on whether we view $f_A$ as a function on the domain $U(n)$ or on the domain $\mathcal{F}_n$. In principle, our algorithm can be implemented on either manifold (the former is somewhat easier to implement numerically, while the latter is lower dimensional leading to potential algorithmic advantages); hence, for completeness we treat both cases. Note that \eqref{eq:fhess_Un} and \eqref{eq:fhess_quotient} appear similar, but differ by the application of the operator $\NN$ to the outer occurrence of the operator $\skeww$. To avoid an excessively cumbersome notation, in Corollary \ref{cor:Rhessian} we borrow from the Riemannian optimization community \cite[Chapter 5]{AMS08} the convention to identify the abstract tangent space at a given equivalence class $[Q]$ with the ``horizontal space" at a chosen representative matrix $Q$. An abstract vector tangent to the flag manifold is thus identified with a concrete vector.

\begin{corollary}
        \label{cor:Rhessian}
Let $f_A$ be defined as in \eqref{def:f}, $Q \in U(n)$, $B=Q^*AQ$, and $S=\NN(B)$. For every $Z = Q\Omega \in \mathcal{T}_Q U(n)$, where $\Omega = -\Omega^*$, the Riemannian Hessian of $f_A$ on $U(n)$ is
        \begin{equation}
            \label{eq:fhess_Un}
            \hess_{U(n)} f_A(Q)[Q\Omega] = 2Q\skeww\left(\Omega\skeww([B^*, S]) + [[B, \Omega]^*, S] + [B^*, \NN([B, \Omega])]\right).
        \end{equation}
Furthermore, let $\mathcal{F}_n = U(n)/U(1)^n$ denote the flag manifold. Fixing a representative $Q \in U(n)$ of an equivalence class in $\mathcal{F}_n$, and for every tangent vector $Z = Q\Omega \in \mathcal{T}_{[Q]} \mathcal{F}_n$, where $\Omega = -\Omega^*$ and $\diag(\Omega) = 0$, the Riemannian Hessian of $f_A$ on $\mathcal{F}_n$ is
        \begin{equation}
            \label{eq:fhess_quotient}
            \hess_{\mathcal{F}_n} f_A(Q)[Q\Omega] = 2Q\NN\left(\skeww\left(\Omega\skeww([B^*, S]) + [[B, \Omega]^*, S] + [B^*, \NN([B, \Omega])]\right)\right).
        \end{equation}
    \end{corollary}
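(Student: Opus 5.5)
We apply Lemma \ref{lm:rgrad2rhess} with $\mathcal{M}=U(n)\subset\Cnn\cong\R^{2n^2}$. The extension of the Riemannian gradient is the natural one suggested by Corollary \ref{cor:Egradf}: for arbitrary $X\in\Cnn$ set
\[ G(X) := 2X\skeww([B(X)^*,S(X)]),\qquad B(X)=X^*AX,\quad S(X)=\NN(B(X)). \]
This field is polynomial in the real and imaginary parts of $X$, hence smooth, and it agrees with $\grad f_A$ on $U(n)$. The proof then has three steps: differentiate $G$ at $Q$ in the direction $Z=Q\Omega$, project onto $\mathcal{T}_QU(n)$, and finally pass to the quotient.

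\emph{Differentiation.} By the product rule,
\[ \delta G(Q)[Q\Omega] = 2Q\Omega\,\skeww([B^*,S]) + 2Q\,\skeww\bigl([dB^*,S]+[B^*,dS]\bigr). \]
Using $Q^*Q=I$ and $\Omega^*=-\Omega$, we get
\[ dB = \Omega^*Q^*AQ + Q^*AQ\Omega = B\Omega-\Omega B = [B,\Omega]. \]
Since $\NN$ is linear, $dS=\NN([B,\Omega])$, and $dB^*=[B,\Omega]^*$. Collecting terms gives
\[ \delta G(Q)[Q\Omega] = 2Q\Bigl(\Omega\,\skeww([B^*,S]) + \skeww\bigl([[B,\Omega]^*,S]+[B^*,\NN([B,\Omega])]\bigr)\Bigr). \]

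\emph{Projection onto $\mathcal{T}_QU(n)$.} The projection is $Y\mapsto Q\skeww(Q^*Y)$, the same formula used in the proof of Lemma \ref{lm:costgradient}. Applying it, and using that $\skeww$ is linear and idempotent ($\skeww\circ\skeww=\skeww$), the inner $\skeww$ on the last two commutators can be absorbed into the outer one. This yields \eqref{eq:fhess_Un}.

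\emph{Flag manifold.} On $\mathcal{F}_n$ I would use the standard quotient-manifold formula \cite[Prop.~5.3.4]{AMS08}, which is valid because $U(1)^n$ acts on the right by isometries and $f_A$ is invariant (Proposition \ref{prop:symmetries}). That formula says the Hessian of the quotient function, at a horizontal lift $Q\Omega$, equals the horizontal projection of $\hess_{U(n)}f_A(Q)[Q\Omega]$. One subtlety is that the quotient formula involves the Levi-Civita connection of the total space applied to the horizontal lift of the gradient, rather than the plain Hessian. Here, however, the gradient is already horizontal (Remark \ref{rem:gradflag}), so the Riemannian Hessian on $U(n)$ applied to horizontal vectors serves as the required derivative of the lifted gradient field.

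It remains to identify the horizontal projection. The vertical space at $Q$ is the tangent space to the orbit $QU(1)^n$, namely $\{QD : D \text{ diagonal, purely imaginary}\}$. With the Frobenius metric, its orthogonal complement in $\mathcal{T}_QU(n)$ is $\{Q\Omega:\Omega=-\Omega^*,\ \diag(\Omega)=0\}$, so the horizontal projection is $Q\Omega\mapsto Q\NN(\Omega)$. Applying this to \eqref{eq:fhess_Un} gives \eqref{eq:fhess_quotient}.

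The main obstacle is justifying this quotient step: that no extra curvature-type correction term (e.g.\ an O'Neill-type term involving the vertical part of the total-space derivative) appears. I would handle it by citing the AMS08 result, which states precisely that for a Riemannian quotient the quotient Hessian is the horizontal projection of the derivative of the lifted gradient field. Since $G$ restricted to $U(n)$ is exactly that lift, the vertical components are simply discarded by the projection and no further terms arise.
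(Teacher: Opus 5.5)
Your proposal is correct and follows the same route as the paper's proof. Both use the extension $G(Q)=2Q\skeww([B^*,S])$ of the gradient, compute $\delta B=[B,\Omega]$ and $\delta S=\NN([B,\Omega])$, project with $Y\mapsto Q\skeww(Q^*Y)$, and obtain the flag-manifold Hessian by horizontal projection onto $\{Q\Omega:\diag(\Omega)=0\}$ via \cite[Section 5.3.4]{AMS08}. Your extra remarks (that the $U(1)^n$ action is isometric, that the gradient is already horizontal, and that vertical terms are discarded by the projection) only spell out what the paper leaves to that citation.
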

    \begin{proof}
    To derive \eqref{eq:fhess_Un}, we specialize Lemma \ref{lm:rgrad2rhess} as appropriate. Let $G(Q) = 2Q\skeww([B^*, S])$ be the smooth extension of $\grad f_A(Q)$. Then,
    \begin{equation*}
        \delta G(Q)[Q\Omega] = 2(Q\Omega)\skeww([B^*, S]) + 2Q\skeww(\delta[B^*, S]) .
    \end{equation*}
    Observe that $\delta[B^*, S] = [\delta B^*, S] + [B^*, \delta S]$. Moreover, $\delta B = \delta(Q^*AQ) = -\Omega B + B\Omega = [B, \Omega]$, and hence $\delta B^* = [B, \Omega]^*$. Therefore, by linearity of $\NN$, $\delta S = \NN(\delta B) = \NN([B, \Omega])$. Substituting into $\delta G$ yields
    \begin{equation*}
        \delta G(Q)[Q\Omega] = 2Q\Omega\skeww([B^*, S]) + 2Q\skeww([[B, \Omega]^*, S] + [B^*, \NN([B, \Omega])]).
    \end{equation*}
    The statement follows by projecting $\delta G(Q)[Q\Omega]$ onto $\mathcal{T}_Q U(n)$. Indeed, the orthogonal projection is $\proj_{\mathcal{T}_Q U(n)}(X) = Q\skeww(Q^*X)$, yielding \eqref{eq:fhess_Un}.

    To derive \eqref{eq:fhess_quotient}, we use the general property of quotient manifolds \cite[Section 5.3.4]{AMS08} that the Riemannian Hessian on $\mathcal{F}_n$ is the orthogonal projection of \eqref{eq:fhess_Un} onto the horizontal space $\mathcal{H}_Q := \{ Q\Omega \in \mathcal{T}_Q U(n) \mid \diag(\Omega) = 0 \}$. Hence, the orthogonal projection onto $\mathcal{H}_Q$ of a vector $V$ is $Q\NN(Q^*V)$. Applying this projection to \eqref{eq:fhess_Un}, we obtain \eqref{eq:fhess_quotient}.
    \end{proof}

\section{Some theoretical results on the function $f_A(Q)$}\label{sec:theory}

A first basic question on the problem of finding a nearest normal matrix is with how much confidence we can speak of ``the" nearest normal matrix; this is easily settled.

\begin{theorem}
    For almost every matrix $A \in \Cnn$, there is a unique normal matrix $N$ nearest to $A$ in the Frobenius distance.
\end{theorem}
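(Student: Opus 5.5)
The plan is to use the classical fact that the metric projection onto a closed set in a finite-dimensional Euclidean space is unique almost everywhere. We regard $\Cnn \cong \R^{2n^2}$ with the Hilbert structure given by $\langle X,Y\rangle=\Re\tr(Y^*X)$; the Frobenius distance is exactly the Euclidean distance in this identification. The set $\mathcal{M}$ of normal matrices is closed, being the zero set of the continuous map $X\mapsto XX^*-X^*X$. Hence for every $A$ a nearest normal matrix exists. Existence also follows from Theorem \ref{prop:equivalent}, since $f_A$ attains its minimum on the compact manifold $U(n)$.

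For uniqueness, I would introduce the squared distance function $d(A):=\min_{X\in\mathcal{M}}\|A-X\|_F^2$. Expanding the square gives
\[ d(A)=\|A\|_F^2-\max_{X\in\mathcal{M}}\bigl(2\langle A,X\rangle-\|X\|_F^2\bigr). \]
The function $g(A):=\max_{X\in\mathcal{M}}(2\langle A,X\rangle-\|X\|_F^2)$ is a supremum of affine functions of $A$, so it is convex. Each of these affine functions is bounded above by $\|A\|_F^2$, so $g$ is finite everywhere.

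By Rademacher's theorem, or the more elementary fact that convex functions are differentiable almost everywhere, $g$ is differentiable outside a Lebesgue null set. At any $A$ where $g$ is differentiable, every maximizer $X_0$ yields a subgradient $2X_0$. This is because $g(A')\ge 2\langle A',X_0\rangle-\|X_0\|_F^2$, with equality at $A'=A$. At a point of differentiability the subdifferential is the single element $\{\nabla g(A)\}$. Therefore all maximizers coincide, and they are precisely the nearest normal matrices to $A$. This proves the theorem.

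The only step needing care is checking that the maximizers of the affine family are exactly the minimizers of $\|A-X\|_F^2$, and that the maximum is attained. Both follow from the closedness of $\mathcal{M}$ together with coercivity: only $X$ with $\|X\|_F\le 2\|A\|_F$ can be competitive, since $X=0$ is normal and already achieves distance $\|A\|_F$. No structure of the normal matrices beyond closedness is needed, so I do not expect a genuine obstacle.

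An alternative, closer to the paper's viewpoint, would analyze $f_A$ on $\mathcal{F}_n$ and show that generically its global minimum is attained at a single orbit up to permutations, with $N(Q)$ the same for all minimizers. That route would be much harder, and I would avoid it.
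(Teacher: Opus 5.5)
Your proof is correct. At the top level it is the same argument as the paper's. Nothing about normality is used except that the set of normal matrices is closed in $\Cnn\cong\R^{2n^2}$, with the Euclidean structure given by $\Re\tr(Y^*X)$. Uniqueness then follows from the general fact that a closed subset of a Euclidean space has a unique nearest point to almost every point.

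The difference is in how that fact is obtained. The paper gets it in one line by citing Erd\H{o}s's theorem that the medial axis of a closed set has Lebesgue measure zero. You prove it directly by the standard convex-analysis argument:
\begin{itemize}
\item $g(A)=\|A\|_F^2-\min_{X}\|A-X\|_F^2$ is a pointwise-finite supremum of affine functions, hence convex and differentiable outside a null set;
\item each nearest normal matrix $X_0$ contributes the subgradient $2X_0$;
\item so at every point of differentiability all nearest normal matrices equal $\tfrac12\nabla g(A)$.
\end{itemize}
All the steps check out, including attainment of the minimum via the a priori bound $\|X\|_F\le 2\|A\|_F$.

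The citation buys brevity. Your route buys self-containedness and a little extra structure: almost everywhere, the map sending $A$ to its nearest normal matrix is $\nabla\bigl(\tfrac12 g\bigr)$, the gradient of a convex function, and in particular it is a monotone map.
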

\begin{proof}
    The set of normal matrices is evidently a closed set. It was proved by Erd\H{o}s \cite{Erdos} that the medial axis of any closed set has Lebesgue measure zero.
\end{proof}

We now turn to the equivalent formulation of the problem as a minimization task over the manifold $\mathcal{F}_n$. Theorem \ref{prop:max} argues that, interestingly (and in contrast with its minimum), its maximum can be found analytically and is generically achieved at quite many points.

\begin{theorem}\label{prop:max}
    The global maximum of the function $f_A$ in \eqref{def:f} is equal to \[ \max f_A = \| A \|_F^2 - \frac1n |\tr(A)|^2.\] Moreover, if $n \geq 3$  then, for almost every $A$, this maximum value is attained at uncountably many $[Q] \in \mathcal{F}_n$.
\end{theorem}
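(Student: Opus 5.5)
Since $\|B\|_F=\|A\|_F$ for $B=Q^*AQ$, we have $f_A(Q)=\|A\|_F^2-\|\diag(B)\|_F^2$. Maximizing $f_A$ therefore amounts to minimizing $\sum_i |B_{ii}|^2$ subject to $\sum_i B_{ii}=\tr(A)$. By Cauchy--Schwarz, $\sum_i|B_{ii}|^2\ge \frac1n|\tr A|^2$, with equality if and only if every $B_{ii}=\tr(A)/n$. This gives $\max f_A\le \|A\|_F^2-\frac1n|\tr A|^2$.

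To show the bound is attained, I will prove that every $A$ is unitarily similar to a matrix with constant diagonal; this is a classical result. Replacing $A$ by $A-\frac{\tr A}{n}I$, it suffices to show that a trace-zero $A$ is unitarily similar to a zero-diagonal matrix. The argument is by induction on $n$.
\begin{itemize}
\item The numerical range $W(A)$ is convex (Toeplitz--Hausdorff) and contains $\tr(A)/n=0$, since the latter is an average of the diagonal entries $e_i^*Ae_i$.
\item So there is a unit vector $v$ with $v^*Av=0$. Extend $v$ to a unitary $V=[v\ V_2]$; then $V^*AV$ has $(1,1)$ entry $0$, and its trailing $(n-1)\times(n-1)$ block $A'$ has trace $0$.
\item Apply the inductive hypothesis to $A'$ and conjugate by $1\oplus W'$ with $W'$ the resulting unitary.
\end{itemize}
This gives a $Q$ with $\diag(Q^*AQ)=\frac{\tr A}{n}I$, so the maximum is attained.

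For the second claim I will show that the maximizing set
\[
M=\{[Q]: \diag(Q^*A_0Q)=0\},\qquad A_0=A-\tfrac{\tr A}{n}I,
\]
is uncountable for generic $A$ when $n\ge3$. Uncountability comes from the first step of the induction: the set $\{v\in S^{2n-1}: v^*A_0v=0\}$ is typically a real submanifold of codimension at most $2$. Different choices of $v$ give different first columns, so different flags. The first column determines $V_1=\mathrm{span}(v)$ of the flag, and distinct lines give distinct points of $\mathcal F_n$. So if uncountably many lines $[v]$ each extend to a full maximizer, we are done.

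Each such $v$ does extend, by the inductive construction applied to the compressed trace-zero block $A'$, which is nonempty by the argument above. So it remains to show that the set of lines $\{[v]\in\mathbb{CP}^{n-1}: v^*A_0v=0\}$ is uncountable. The map $[v]\mapsto v^*A_0v/\|v\|^2$ sends $\mathbb{CP}^{n-1}$, of real dimension $2n-2\ge4$ for $n\ge3$, onto $W(A_0)\subset\C$.
\begin{itemize}
\item For generic $A$, $0$ lies in the interior of $W(A_0)$: a trace-zero matrix has $0$ on the boundary of its numerical range only if it is degenerate (e.g.\ $W$ is a segment, forcing $e^{i\theta}A_0$ Hermitian, a nongeneric condition).
\item Away from critical points, the preimage of an interior regular value has dimension $2n-4\ge2$, hence is uncountable. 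Even without regularity, the fibre over an interior point of a connected $(2n-2)$-manifold mapping into $\mathbb{R}^2$ cannot be countable when $2n-2>2$: removing a countable closed set does not disconnect $\mathbb{CP}^{n-1}$, while the image minus that point stays connected, so this needs a cleaner argument. The preferred route is Sard's theorem: for generic $A$, $0$ is a regular value.
\end{itemize}

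The main obstacle is this last genericity step. My first attempt is the regular-value argument: the differential at $v$ is $w\mapsto 2\,v^*A_0 w$ restricted to $w\perp v$. It fails to be surjective only when $A_0v$ and $A_0^*v$ are such that $\Re(e^{i\phi}v^*A_0w)=0$ for all $w\perp v$ and some $\phi$, i.e.\ when $(e^{i\phi}A_0+e^{-i\phi}A_0^*)v\parallel v$. That is, $v$ is an eigenvector of a Hermitian part $H_\phi$ with $v^*A_0v=0$, which forces $\lambda(H_\phi)=0$ together with an extra condition. Counting parameters ($\phi$ and $v$ against the equations) shows this is a proper algebraic condition on $A$, hence of measure zero. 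This would complete the proof. For $n=2$ the fibre is finite, which explains why the statement requires $n\ge3$.
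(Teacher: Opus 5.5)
Your proof is correct. The first half is the paper's argument. The paper shifts by $\frac{\tr A}{n}I$ and cites Fillmore for unitary similarity to a zero-diagonal matrix, and your Toeplitz--Hausdorff induction proves that fact.

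The second half takes a different route. The paper applies parametric transversality to the whole map $[Q]\mapsto\diag(Q^*A_0Q)$ on $\mathcal F_n$. Since $([Q],A_0)\mapsto\diag(Q^*A_0Q)$ is a submersion already through its $A_0$-derivative, Sard's theorem applied to the projection of its zero set onto the traceless matrices shows the following: for almost every $A$, the maximizing set is a nonempty smooth submanifold of $\mathcal F_n$ of dimension $(n-1)(n-2)$.

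You instead pass to the first line of the flag. Every $v$ with $v^*A_0v=0$ extends to a maximizer by your induction, and $[Q]\mapsto\mathrm{span}(Qe_1)$ is well defined on $\mathcal F_n$. So only a single complex quadratic equation on $\mathbb{CP}^{n-1}$ has to be shown regular. This is more elementary and gives an explicit exceptional set. However, it only shows that the maximizing set maps onto a $(2n-4)$-dimensional manifold of lines, whereas the paper obtains the manifold structure and dimension of the maximizing set itself.

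Two details of your genericity step should be fixed.
\begin{enumerate}
\item The differential at $v$ on $v^\perp$ is the $\R$-linear map $w\mapsto v^*A_0w+w^*A_0v$, not $2v^*A_0w$. Your criterion, that $v$ be a null vector of $e^{i\phi}A_0+e^{-i\phi}A_0^*$ for some $\phi$, is what the correct formula gives.
\item The parameter count can be made precise. Write $e^{i\phi}A_0=H+iK$ with $H,K$ traceless Hermitian. The exceptional set is then the smooth image of $\{(\phi,[v],H,K): Hv=0,\ v^*Kv=0\}$. This is a vector bundle over $S^1\times\mathbb{CP}^{n-1}$ of real dimension $1+(2n-2)+\bigl((n-1)^2-1\bigr)+(n^2-2)=2n^2-3$, sitting in the $(2n^2-2)$-dimensional space of traceless matrices, so its image is null. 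Alternatively, run the paper's parametric Sard argument on $(v,A_0)\mapsto v^*A_0v$, whose $A_0$-derivative $E\mapsto v^*Ev$ is onto $\C$.
\end{enumerate}
The abandoned connectivity remark is not needed and can be deleted.
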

\begin{proof}
Let $a:=\frac{\tr(A)}n$. For the first part of the statement, noting the identities 
\[f_A([Q])=f_{A-aI}([Q]) \ \text{and} \  \|A-aI\|_F^2 = \|A\|_F^2-\frac1n |\tr(A)|^2,  \]
we may with no loss of generality assume $\tr(A)=0$. Because the diagonal and off-diagonal part of a matrix are orthogonal (in the Frobenius inner product) subspaces of $\C^{n \times n}$, we have that $f_A([Q]) = \| A \|_F^2 - \| \diag(Q^*AQ)\|_F^2 \leq \|A\|_F^2$, and the bound is attainable because every traceless matrix is unitarily similar to a matrix with zero diagonal \cite[Corollary 1]{Fillmore}.

We may restrict to traceless matrices also for the second part. Indeed, if a subset $\mathcal{S}$ has Lebesgue measure zero in $ \{ A : \tr(A)=0 \}$, then the subset $\mathcal{S} \times \C$ also has Lebesgue measure zero in $\{ A : \tr(A)=0 \} \times \C \cong \Cnn$. Consider now the smooth map 
$F([Q],A)=\diag(Q^*AQ)$, viewed as having domain $\mathcal{F}_n \times \{ A:\tr(A)=0\}$ and image $\{  D : D=\diag(D), \tr(D)=0\}$.
Since $F$ is a submersion, $F^{-1}(0)$ is a smooth submanifold of $\mathcal{F}_n \times \{ A:\tr(A)=0\}$. Moreover, by Sard's Theorem \cite[Theorem 6.10]{Lee}, almost every $A$ is a regular value of the projection \[ \pi : F^{-1}(0) \rightarrow \{ A:\tr(A)=0\}, \qquad \pi(([Q],A)) := A.\]

Fix such a regular value $A$; taking into account also that the differential of $F$ with respect to $A$ is surjective, then $0$ is a regular value of
the map
\[
\Phi_A:\mathcal F_n\rightarrow
\{D\in\Cnn:D=\diag(D),\ \tr(D)=0\},
\ 
\Phi_A([Q])=\diag(Q^*AQ).
\] Again by the Regular Value Theorem \cite[Corollary 5.14]{Lee}, we conclude that, for almost every traceless matrix $A$, the set  $\{ [Q] \in \mathcal{F}_n : \diag(Q^* A Q)=0 \}$
is a smooth submanifold of $\mathcal{F}_n$. By a simple computation, its real dimension is
\[ \dim_\R \mathcal{F}_n - \dim_\R \{  D : D=\diag(D), \tr(D)=0\} = (n^2 - n) - (2n-2 ) = (n-1)(n-2),   \]
which is positive for $n \geq 3$.
\end{proof}
Note that the wild behaviour of the global maximizer described by  Theorem \ref{prop:max} is in some sense artificial; indeed, every global maximizer corresponds, through the map $N(Q)$, to the same normal matrix
$(\tr(A)/n)I$. 

In Theorem \ref{thm:sard}, we now study the local minimizers of $f_A$ which, fortunately for our algorithmic developments, are much better behaved than its global maximizers. We start by the auxiliary Lemma \ref{lem:gendiagdist}; note that, in particular, its assumptions are satisfied by every local minimizer $[Q] \in \mathcal{F}_n$.

\begin{lemma}\label{lem:gendiagdist}
  For almost every $A \in \Cnn$, if $[Q] \in \mathcal{F}_n$ is a stationary point of $f_A$ in \eqref{def:f} such that $\hess_{\mathcal{F}_n} f_A(Q)$ is positive semidefinite, then the matrix $B=Q^*AQ$ has distinct diagonal elements.
\end{lemma}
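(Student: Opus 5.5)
The plan is to argue by contradiction in two stages: first show that a coincidence $B_{ii}=B_{jj}$ at a stationary point with positive semidefinite Hessian forces the whole $2\times 2$ principal block of $B$ in positions $\{i,j\}$ to be scalar; then show, by a dimension count in the style of Theorem \ref{prop:max}, that such stationary points occur only for a null set of inputs $A$.

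\emph{Step 1: a scalar block.} Suppose $[Q]$ is stationary, $\hess_{\mathcal{F}_n} f_A(Q)\succeq 0$, and $B_{ii}=B_{jj}=:a$ for some $i\neq j$. Move along the curve $Q G(\theta)$, where $G(\theta)$ is a Givens rotation acting in the $(i,j)$ plane with a free phase $\varphi$,
\[ G_{ii}=G_{jj}=\cos\theta,\qquad G_{ij}=-e^{i\varphi}\sin\theta,\qquad G_{ji}=e^{-i\varphi}\sin\theta . \]
Its tangent direction $Q\Omega$ has $\diag(\Omega)=0$, so it is horizontal and hence a legitimate tangent vector to $\mathcal{F}_n$.

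For $k\notin\{i,j\}$, conjugation by $G(\theta)$ mixes the pairs $(B_{ik},B_{jk})$ and $(B_{ki},B_{kj})$ unitarily. So $|B_{ik}|^2+|B_{jk}|^2$ and $|B_{ki}|^2+|B_{kj}|^2$ are unchanged, and entries outside rows and columns $i,j$ are untouched. It therefore suffices to track the $2\times2$ block $aI+M$, where $M=\begin{bmatrix}0&b\\ c&0\end{bmatrix}$, $b=B_{ij}$, $c=B_{ji}$. The rotated $M$ stays traceless, so its diagonal is $(d,-d)$. Frobenius norm is preserved, which gives
\[ f_A(QG(\theta)) = f_A(Q) - 2|d(\theta)|^2 . \]
A direct computation gives $d(\theta)=\cos\theta\sin\theta\,(b e^{-i\varphi}+c e^{i\varphi})$ up to sign and conjugation conventions. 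Hence along this geodesic the second derivative of $f_A$ is $-4|b e^{-i\varphi}+c e^{i\varphi}|^2$.

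Because the point is stationary, this second derivative equals the Hessian quadratic form in the direction $Q\Omega$. Positive semidefiniteness then forces $b e^{-i\varphi}+c e^{i\varphi}=0$ for every $\varphi$, i.e.\ $B_{ij}=B_{ji}=0$. The conclusion of Step 1 is that a coincidence $B_{ii}=B_{jj}$ implies the principal block of $B$ in positions $\{i,j\}$ is the scalar matrix $aI$.

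\emph{Step 2: genericity.} It remains to show that, for almost every $A$, no stationary point has $B_{ii}=B_{jj}$ and $B_{ij}=B_{ji}=0$ for some pair $i\neq j$. Fix the pair and consider the incidence set $\mathcal{Z}\subset\mathcal{F}_n\times\Cnn$ of pairs $([Q],A)$ where $B=Q^*AQ$ satisfies:
\begin{itemize}
\item the $6$ real conditions $B_{ii}=B_{jj}$, $B_{ij}=0$, $B_{ji}=0$;
\item stationarity, which is $n^2-n$ real equations (the off-diagonal entries of the skew-Hermitian matrix $\skeww([B^*,\NN(B)])$).
\end{itemize}
Under the first group of conditions, the $(i,j)$ entry of $\skeww([B^*,\NN(B)])$ vanishes identically. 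Using $S_{ij}=S_{ji}=0$ and $B_{ii}=B_{jj}$, the terms that remain in $[B^*,S]_{ij}$ pair up into something Hermitian. So only $n^2-n-2$ stationarity equations are effective, and the total number of conditions is $n^2-n+4$.

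If these conditions are independent, then $\dim_\R\mathcal{Z}\le (n^2-n)+2n^2-(n^2-n+4)=2n^2-4<2n^2$. The projection of $\mathcal{Z}$ to the $A$-space is then a null set, and taking the finite union over pairs $(i,j)$ finishes the proof.

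To obtain independence, I would change variables from $([Q],A)$ to $([Q],B)$, which is a diffeomorphism since $A=QBQ^*$. All the conditions depend only on $B$, so it is enough to show that the map from $B$ to these $n^2-n+4$ quantities is a submersion on $\mathcal{Z}$. Failing that, it suffices to stratify $\mathcal{Z}$ into finitely many semialgebraic pieces, each of dimension less than $2n^2$.

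\emph{Main obstacle.} The submersion check is the step I expect to be hardest. The stationarity equations $B_{kl}\overline{(B_{kk}-B_{ll})}-\overline{B_{lk}}(B_{ll}-B_{kk})=0$ are quadratic in $B$, and their differential degenerates where further diagonal coincidences occur. My approach would be to differentiate with respect to the off-diagonal entries $B_{kl}$ for pairs $\{k,l\}$ with $B_{kk}\neq B_{ll}$. Each such pair gives a real-linear map of rank $2$, which yields full rank on the generic stratum. Degenerate strata, where more diagonal entries coincide, I would handle by induction on the number of coincidences: each additional coincidence imposes $2$ more real conditions while losing at most $2$ equations of rank. The dimension bound $\le 2n^2-4$ should therefore persist on every stratum.
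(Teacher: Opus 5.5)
Your Step 1 is correct and is essentially the paper's first step. You obtain $-4|be^{-i\varphi}+ce^{i\varphi}|^2$ by differentiating along the horizontal geodesic $QG(\theta)$, whereas the paper reads $-2\|\diag([B,\Omega])\|_F^2$ off \eqref{eq:fhess_quotient}.

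The gap is in Step 2. There you keep only $B_{ij}=B_{ji}=0$ for a \emph{single} coinciding pair. With that information alone the count fails for $n\ge 4$, and the claim you set out to prove there is in fact false. Your induction heuristic is where it breaks. The stationarity equation for a pair $(k,l)$ is vacuous exactly when $B_{kk}=B_{ll}$. So adding an index to a group of $m$ equal diagonal entries imposes $2$ new real conditions but removes $2m$ real stationarity equations, not $2$.

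On the stratum where the diagonal values have multiplicities $k_1,\dots,k_r$, your independent real conditions exceed $\dim\mathcal F_n=n^2-n$ only by $4+\sum_a (k_a-1)(2-k_a)$. This is negative as soon as some $k_a\ge 4$. For instance, if all diagonal entries coincide, stationarity is automatic because $[B^*,\diag(B)]=0$. Your conditions are then just $2(n-1)+4$ real equations on $B$, so $\dim\mathcal Z=(n^2-n)+2n^2-(2n+2)=3n^2-3n-2>2n^2$ for $n\ge 4$.

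This is not merely a weak estimate. Take $n=4$ and any $A$ close to $J_2(0)\oplus J_2(0)$. Then $\tr(A)/4$ lies in the rank-$2$ numerical range of $A$, by the Li--Sze characterization, since $\lambda_2(e^{i\xi}A+e^{-i\xi}A^*)$ stays uniformly close to $1$. Hence there is an isometry $W\in\C^{4\times 2}$ with $W^*AW=\frac{\tr A}{4}I_2$. Completing $W$ to a unitary and applying Fillmore's theorem to the complementary $2\times 2$ block yields $Q$ with $\diag(Q^*AQ)=\frac{\tr A}{4}I$ and $(Q^*AQ)_{12}=(Q^*AQ)_{21}=0$. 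Such $[Q]$ is stationary, so the projection of your $\mathcal Z$ contains an open set.

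The missing idea is to apply Step 1 to \emph{every} pair with equal diagonal entries. After grouping equal diagonal values, this forces the diagonal blocks of $B$ to be scalar, $\lambda_a I_{k_a}$. That gives $2\sum_a k_a(k_a-1)$ real conditions, which more than compensate for the $\sum_a k_a(k_a-1)$ lost stationarity equations. No submersion check is then needed. Each stratum is explicitly parametrized by the $\lambda_a$ and the blocks above the block diagonal; the blocks below are determined by them via Remark \ref{rem:stationary}. The resulting codimension is $\sum_a (k_a-1)(k_a+2)\ge 4$ in your $\mathcal F_n\times\Cnn$ set-up. It is $2\sum_a(k_a^2-1)\ge 6$ if, as in the paper, one also quotients by the $\prod_a U(k_a)$ symmetry that preserves this block structure. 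Since there are finitely many multiplicity patterns, this gives the lemma.
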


\begin{proof}
    Let $[Q]$ have the  assumed properties. Up to relabelling some indices, suppose $B_{11}=B_{22}$ and parametrize the skew-Hermitian matrix $\Omega$ as $\Omega_{12}=-\overline{\Omega_{21}}= z \neq 0$ and $\Omega_{ij}=0$ for all other values of $(i,j)$. From \eqref{eq:fhess_quotient} and noting that $\skeww([B^*,S])=0$ because $[Q]$ is stationary, for all $z$ it holds
    \[  0 \leq \langle Q \Omega, \hess_{\mathcal{F}_n} f_A(Q)[Q\Omega] \rangle  = - 2 \| \diag([B,\Omega]) \|_F^2 = -4 |B_{12} \overline{z} + B_{21} z|^2,   \]
    implying $B_{12}=B_{21}=0$. 

    Now let
$\lambda_1,\ldots,\lambda_r$ denote the distinct diagonal elements of $B$, with multiplicities
$k_1,\ldots,k_r$, where $r<n$. Up to a permutation that groups together equal diagonal entries, the reasoning above shows that
the diagonal blocks of $B$ have the form $\lambda_a I_{k_a}$. Moreover, the same argument as in Remark \ref{rem:stationary} shows that the off-diagonal blocks below the block diagonal are determined by those above the block diagonal. Given a multiplicity pattern, the matrices $A$ allowing repeated diagonal elements of $B$ admit a parametrization whose degrees of freedom are those of the decomposition $\C^n = \bigoplus_a V_a$ (with $\dim V_a = k_a$), the diagonal elements $\lambda_a$, and the blocks of $B$ above the diagonal. The real dimension of this parameterization is
\[ \underbrace{n^2 - \sum_a k_a^2}_{\dim_\R U(n)/\prod_a U(k_a)} + \underbrace{2r}_{\text{diagonal} \ \text{elements}} + \underbrace{n^2 - \sum_a k_a^2}_{\text{off-diagonal} \ \text{blocks}}. \]
Thus, for any fixed multiplicity pattern, the codimension is $2 \sum_a (k_a^2-1) \geq 6$. 
There are finitely many possible patterns, and hence \cite[Theorem 6.9]{Lee} implies the statement.
\end{proof}

\begin{theorem}\label{thm:sard}
There exists a set $\mathcal J\subset\Cnn$ of Lebesgue measure zero
such that, for every $A\notin\mathcal J$, the function $f_A$ in
\eqref{def:f} has finitely many local minimizers on $\mathcal F_n$.
Moreover, for every $A \not \in \mathcal J$ and for every local minimizer $[Q]$, the diagonal entries of $Q^*AQ$ are
pairwise distinct and the Riemannian Hessian of $f_A$ is positive
definite.
\end{theorem}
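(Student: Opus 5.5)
The plan is a parametric transversality argument (Sard applied to an incidence variety), in the same spirit as the proof of Theorem \ref{prop:max}. The key point is to show that, for generic $A$, every critical point of $f_A$ on $\mathcal F_n$ with distinct diagonal entries is nondegenerate. Given that, the statement follows in three steps.

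First, Lemma \ref{lem:gendiagdist} applies at every local minimizer, because local minimizers are stationary with positive semidefinite Hessian. So, outside a null set, every local minimizer has pairwise distinct diagonal entries in $B=Q^*AQ$.

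Second, nondegeneracy together with positive semidefiniteness gives positive definiteness.

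Third, nondegenerate critical points are isolated. Since $\mathcal F_n$ is compact, there could only be infinitely many local minimizers if they accumulated somewhere. The limit point would be a stationary point with positive semidefinite Hessian, since both conditions are closed. By Lemma \ref{lem:gendiagdist} it would have distinct diagonal entries, hence be nondegenerate, hence isolated among critical points. This is a contradiction, so the local minimizers are finitely many.

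For the key step, I consider the open set $\mathcal U\subset \mathcal F_n\times\Cnn$ of pairs $([Q],A)$ such that $Q^*AQ$ has distinct diagonal entries. On $\mathcal U$ define the smooth map
\[ G([Q],A) := \skeww([B^*,\NN(B)]), \qquad B=Q^*AQ. \]
By Corollary \ref{cor:Egradf} and Remark \ref{rem:gradflag}, $G$ takes values in the space $\mathcal K$ of skew-Hermitian matrices with zero diagonal, which has real dimension $n^2-n=\dim_\R\mathcal F_n$. Moreover, $G(\cdot,A)$ is, up to the factor $2Q$, the Riemannian gradient of $f_A$, i.e.\ a section of $T\mathcal F_n$. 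I would show that $G$ is a submersion at every zero in $\mathcal U$. Then $G^{-1}(0)$ is a smooth manifold of dimension $2n^2$. By Sard's theorem \cite[Theorem 6.10]{Lee}, almost every $A$ is a regular value of the projection $\pi:G^{-1}(0)\to\Cnn$. For such $A$, the partial differential $\partial_{[Q]}G$ is surjective at every zero. Since at a zero the covariant derivative coincides with the plain derivative in a trivialization, this says exactly that $\hess_{\mathcal F_n} f_A(Q)$ is nonsingular at every critical point with distinct diagonal entries. The union of this null set with the one from Lemma \ref{lem:gendiagdist} gives $\mathcal J$.

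The main obstacle is proving that $G$ is a submersion at its zeros, and I would do it using only variations in $A$. By unitary invariance we may take $Q=I$, so that $B=A$. Perturbing $A\mapsto A+E$ gives the differential $E\mapsto \skeww\big([E^*,\NN(B)]+[B^*,\NN(E)]\big)$, and it suffices to restrict to off-diagonal $E$ supported on a single pair $\{(i,j),(j,i)\}$. A short computation shows the $(i,j)$ entry of the image is, up to terms independent of $E_{ij}$ and $E_{ji}$, a real-linear combination of $E_{ij}$ and $\overline{E_{ji}}$ weighted by $\overline{B_{ii}-B_{jj}}$ and $(B_{jj}-B_{ii})$, compare Remark \ref{rem:stationary}. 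Other entries receive contributions only through products with off-diagonal entries of $B$. Since $B_{ii}\neq B_{jj}$, choosing $E_{ji}=0$ already hits an arbitrary complex value in position $(i,j)$. To handle the cross terms, I would order the pairs and argue that the resulting linear map is block triangular with invertible diagonal blocks; if that fails, I would perturb diagonal entries of $E$ as well. Once this is done, the rest is routine.
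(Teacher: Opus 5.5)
Your architecture is the same as the paper's. Lemma \ref{lem:gendiagdist} gives distinct diagonal entries, an incidence manifold is cut out by the gradient on the open set where $Q^*AQ$ has distinct diagonal entries, Sard's theorem is applied to the projection $\pi$, and compactness gives finiteness. (One small point: the paper works on $U(n)\times\Cnn$, where $F(Q,A)=Q^*\grad f_A(Q)$ is an honest map, and regularity of $A$ forces $\ker d\pi$ to be the $n$-dimensional space of diagonal directions. On $\mathcal F_n\times\Cnn$ your $G$ is only defined up to $G(QD,A)=D^*G(Q,A)D$, so you need local sections; this is harmless.)

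The genuine gap is the step you yourself call the main obstacle: you never establish surjectivity of $\partial_A G$, and the plan you sketch would not establish it. If the cross terms you describe were really present, no ordering of the pairs could make the map block triangular. For $E=\epsilon e_ie_j^T$, the term $\skeww([B^*,E])$ has $(i,l)$ entry $-\tfrac12\epsilon\overline{B_{lj}}$, and symmetrically a perturbation at $(i,l)$ feeds into $(i,j)$. So pairs sharing an index are coupled in both directions. Your fallback of perturbing $\diag(E)$ only introduces more terms involving off-diagonal entries of $B$.

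The missing idea is that the cross terms cancel identically. Since $[E^*,B]+[B^*,E]=[E^*,B]+[E^*,B]^*$ is Hermitian, its skew part vanishes, so
\[
\skeww\bigl([E^*,\NN(B)]+[B^*,\NN(E)]\bigr) = -\skeww\bigl([E^*,\diag(B)]+[B^*,\diag(E)]\bigr).
\]
Equivalently, $\skeww([B^*,\NN(B)])=-\skeww([B^*,\diag(B)])$ because $[B^*,B]$ is Hermitian; this is the form the paper differentiates.

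For $\diag(E)=0$, the $(i,j)$ entry of the image is
\[
\tfrac12\bigl(\overline{(B_{ii}-B_{jj})}\,E_{ij}+(B_{ii}-B_{jj})\,\overline{E_{ji}}\bigr).
\]
This involves no other entry of $E$ and no off-diagonal entry of $B$. In the paper it appears as $H_{ij}(\Re B_{ii}-\Re B_{jj})+\sqrt{-1}\,K_{ij}(\Im B_{jj}-\Im B_{ii})$ after splitting $E=H+K$.

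Taking $E_{ji}=0$ then hits any complex value in position $(i,j)$, because $B_{ii}\neq B_{jj}$. Hence $\partial_A G$ is onto $\mathcal K$ at every point of $\mathcal U$, not only at zeros. With this in hand, your remaining steps go through exactly as in the paper.
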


\begin{proof}
Let $\mathcal{J}_1 \subset \Cnn$ denote the set of matrices that are exceptional in the sense of Lemma \ref{lem:gendiagdist}. In other words, for every $A\notin\mathcal J_1$,
every stationary point of $f_A$ with positive semidefinite Hessian has
pairwise distinct diagonal entries.

 Consider the open set
\[
 \mathcal U
 :=
 \left\{
 (Q,A)\in U(n)\times\Cnn:
 (Q^*AQ)_{ii}\neq (Q^*AQ)_{jj}\quad\text{for }i\neq j
 \right\}
\]
and define
\[
 F:\mathcal U\longrightarrow\mathcal T_I\mathcal F_n,
 \qquad
 F(Q,A):=Q^*\grad f_A(Q).
\]
For a fixed $A$, the zeros of $F$ are precisely the stationary points of $f_A$ on $U(n)$. Denoting $B:=Q^*AQ$, we see that $\partial_BF(I,B)[dB]$ is equal to
\[ [\diag(dB),B^*]+[\diag(B),dB^*]+ [\diag(dB^*),B]+[\diag(B^*),dB].
\]
Restrict now to $\diag(dB)=0$ and write $dB=H+K$, where
$H=H^*$ and $K=-K^*$. Then
\[
 \frac12\partial_BF(I,B)[dB]
 =
 \sqrt{-1} \cdot [K,\Im\diag(B)]-[H,\Re\diag(B)].
\]
For $i>j$, its $(i,j)$ entry is
\begin{equation}\label{eq:surj}
    H_{ij}\bigl(\Re B_{ii}-\Re B_{jj}\bigr)
 +
 \sqrt{-1} \cdot K_{ij}\bigl(\Im B_{jj}-\Im B_{ii}\bigr). 
\end{equation}
Since $B_{ii}\neq B_{jj}$ by construction, at least one of the coefficients between brackets in \eqref{eq:surj}
is nonzero. Moreover, $H_{ij}$ and $K_{ij}$ can be chosen
independently, and hence \eqref{eq:surj}  can take any value in $\C$.
It follows that $\partial_BF(I,B)$, and hence (via Corollary \ref{cor:Egradf} and Remark \ref{rem:stationary}) $\partial_AF(Q,A)$,
is surjective. 

Therefore, $0$ is a regular value of $F$. By the Regular Value
Theorem \cite[Corollary 5.14]{Lee},
$ \mathcal M:=F^{-1}(0)$
is a smooth submanifold of $\mathcal U$ having real dimension $ \dim_{\R}\mathcal M
 =
 (n^2+2n^2)-(n^2-n)
 =
 2n^2+n$. On the other hand, consider the projection
\[
 \pi:\mathcal M\longrightarrow\Cnn,
 \qquad
 \pi(Q,A)=A.
\]
By Sard's Theorem \cite[Theorem 6.10]{Lee}, the set $\mathcal J_2$ of critical values of $\pi$
 has Lebesgue measure zero. Define $\mathcal J = \mathcal{J}_1 \cup \mathcal{J}_2$; clearly $\mathcal J$ has Lebesgue measure zero.

Fix now $A\notin\mathcal J$, and let $[Q]$ be a local minimum of $f_A$ so that by Lemma \ref{lem:gendiagdist} $Q^* A Q$ has distinct diagonal elements. Since $A$
is a regular value of $\pi$, the differential of $\pi$ is surjective
at $(Q,A)$, and hence $
 \dim\ker d\pi_{(Q,A)}
 =
 \dim\mathcal M-\dim_{\R}\Cnn
 =
 n$. In particular, one can verify that $\ker d \pi_{(Q,A)} = \{ (Q \Omega, 0) : \Omega = -\Omega^* = \diag(\Omega) \}$.

 Suppose for a contradiction that the Hessian of $f_A$ on $\mathcal F_n$ is singular at
$[Q]$. Then, there exists a nonzero skew-Hermitian matrix $\Omega$
with $\diag(\Omega)=0$ such that $\hess_{\mathcal F_n}f_A([Q])[Q\Omega]=0$. Differentiating
$F(Q,A)=Q^*\grad f_A(Q)$ in the direction $Q\Omega$ and using
Corollary~\ref{cor:Rhessian},
\[
 \partial_QF(Q,A)[Q\Omega]
 =
 Q^*\hess_{\mathcal F_n}f_A([Q])[Q\Omega]
 =
 0.
\]
Hence $(Q\Omega,0)\in\ker d\pi_{([Q],A)}$ and therefore $\Omega=\diag(\Omega)=0$. We conclude that the Hessian is nonsingular and hence positive definite. 

Finally, suppose for a contradiction that $f_A$ has infinitely many
distinct local minimizers. Let $[Q_k]$ be a sequence of minimizers.
Since $\mathcal{F}_n$ is compact, after taking a subsequence we may assume
that $[Q_k]\to [Q]$ for some $[Q] \in \mathcal{F}_n$. By \eqref{eq:fgrad} and
\eqref{eq:fhess_quotient}, $[Q]$ is stationary and its Hessian is
positive semidefinite; moreover, since $A\notin\mathcal J_1$, the diagonal
entries of $Q^*AQ$ are pairwise distinct. Therefore, repeating the argument above, the Hessian at $[Q]$ is positive definite, which is a contradiction because by construction $[Q]$ is not an
isolated stationary point.

\end{proof}

\begin{remark}
    Of course, since $f_A$ is  continuous on the compact manifold $\mathcal{F}_n$, it has at least one local (and global) minimizer. In addition, recall from Proposition \ref{prop:symmetries} that $f_A(Q)=f_A(QP)$ for every permutation matrix $P \in S_n$. Since the equation $QD=QP$, with $Q\in U(n),D \in U(1)^n, P \in S_n$, implies $P=D$ and hence $P=D=I$, there are in fact at least $(n!)$ minimizers of $f_A$ on $\mathcal{F}_n$.  
\end{remark}

Theorem \ref{thm:sard} has some interesting implications. First, to obtain a finiteness result on the set of local minimizers, it is unavoidable to quotient by the diagonal unitary matrices, i.e., consider the optimization task on $\mathcal{F}_n$. Indeed, on $U(n)$, each of the stationary points on the flag manifold is necessarily associated to uncountably many unitary matrices, obtained by right multiplication by an arbitrary unitary diagonal matrix. (Recall from Proposition \ref{prop:symmetries} that every unitary matrix belonging to the same equivalence class in the flag manifold is associated to the same normal matrix, so this abundance is in a sense artificial; but it may nevertheless cause numerical issues.) Excluding a null set of ``bad inputs" is also necessary. For example, if $A$ is a multiple of the identity matrix, then $f_A(Q)=0$ is constant and hence every $Q \in \mathcal{F}_n$ is a local minimizer. We conclude this subsection by deducing a corollary of Theorem \ref{thm:sard}.

\begin{corollary}
  For almost every $A\in\Cnn$, over the set of normal matrices
there are finitely many local minimizers of the function $\| A - X \|_F^2$. Moreover, every such minimizer has distinct eigenvalues, and corresponds to precisely $n!$ local minimizers of $f_A$ on
$\mathcal F_n$.
\end{corollary}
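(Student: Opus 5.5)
We work with the null set $\mathcal J$ of Theorem \ref{thm:sard} and fix $A\notin\mathcal J$. The plan has three steps. First, every local minimizer $N_0$ of $\|A-X\|_F^2$ is produced by a local minimizer of $f_A$. Second, this forces distinct eigenvalues. Third, the count of $n!$ and the finiteness follow from the permutation symmetry.

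Let $N_0$ be a local minimizer of $\|A-X\|_F^2$ over the normal matrices, and pick any $Q_0\in\mathcal R$, i.e.\ $Q_0^*N_0Q_0$ is diagonal. By part 2 of Theorem \ref{prop:equivalent}, $N(Q_0)=N_0$ and $[Q_0]$ is a local minimizer of $f_A$ on $\mathcal F_n$. Hence
\[ Q_0^*N_0Q_0=\diag(Q_0^*AQ_0). \]
By Theorem \ref{thm:sard}, the diagonal entries of $Q_0^*AQ_0$ are pairwise distinct. These entries are exactly the eigenvalues of $N_0$, so $N_0$ has distinct eigenvalues.

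Since $N_0$ has simple eigenvalues, its unit eigenvectors are determined up to phases and ordering. Therefore
\[ \mathcal R=\{Q_0DP : D\in U(1)^n,\ P \text{ a permutation matrix}\}. \]
The classes $[Q_0P]\in\mathcal F_n$ are pairwise distinct, because $QD=QP$ forces $P=I$, as in the remark after Theorem \ref{thm:sard}. So $\mathcal R$ projects to exactly $n!$ points of $\mathcal F_n$. By Theorem \ref{prop:equivalent} each of these points is a local minimizer of $f_A$.

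Conversely, suppose $[Q]$ is a local minimizer of $f_A$ with $N(Q)=N_0$. Then $Q^*N_0Q$ is diagonal, so $Q\in\mathcal R$. Hence $N_0$ corresponds to precisely $n!$ local minimizers of $f_A$ on $\mathcal F_n$.

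Finiteness now follows directly. The map $N_0\mapsto\{[Q_0P]\}$ sends distinct local minimizers $N_0$ to disjoint sets of local minimizers of $f_A$, since $N$ is well defined on $\mathcal F_n$. Theorem \ref{thm:sard} says $f_A$ has finitely many local minimizers on $\mathcal F_n$, so the number of local minimizers $N_0$ is at most $1/n!$ times that finite number.

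The main point to handle with care is the direction of part 2 of Theorem \ref{prop:equivalent} being used. We only need that a local minimizer $N_0$ yields local minimizers $[Q]$ of $f_A$ for every $Q\in\mathcal R$. We do not need the converse, which in general fails for derogatory $N_0$ (Remark \ref{rem:nonder}). That direction is enough to get the distinct eigenvalues and the counting.

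One subtlety is the phrase "corresponds to precisely $n!$". We should read it as counting the local minimizers $[Q]$ of $f_A$ with $N(Q)=N_0$. We then need to exclude other local minimizers $[Q]$ of $f_A$ with $N(Q)=N_0$ but $Q\notin\mathcal R$. This is automatic, since $N(Q)=N_0$ means $N_0=Q\,\diag(Q^*AQ)\,Q^*$, so $Q$ diagonalizes $N_0$.
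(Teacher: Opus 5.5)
Your proof is correct and takes essentially the paper's route. The paper's one-line proof also combines part 2 of Theorem~\ref{prop:equivalent} with Theorem~\ref{thm:sard}, and you supply the details it leaves implicit: distinct eigenvalues, $\mathcal R=\{Q_0DP\}$, exactly $n!$ classes, and finiteness via disjoint fibres. The paper additionally cites Remark~\ref{rem:nonder}, which gives the converse (every local minimizer of $f_A$ yields a local minimizer of the distance), but as you observe, that direction is not needed for the statement as formulated.
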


\begin{proof}
The statement follows by combining Theorem~\ref{prop:equivalent}, Remark~\ref{rem:nonder}, and Theorem~\ref{thm:sard}.
\end{proof}

\subsection{A full analytic solution for $n=2$}

In $\C^{2 \times 2}$, the nearest normal matrix problem can be solved analytically \cite{Causey,Gabriel,Higham}. Here, we take a different viewpoint by reinterpreting this classic result in terms of the equivalent problem of minimizing \eqref{def:f}. In other words, we go beyond the (known) analytic description of the normal matrices nearest to $A \in \C^{2 \times 2}$ and also provide a novel (to our knowledge) analysis of the associated equivalent classes of optimal unitary matrices in $U(2)/U(1)^2$. In both Example \ref{ex:1} and Example \ref{ex:2}, we denote by $P = \begin{bmatrix}
    0&1\\
    1&0
\end{bmatrix}$ the $2 \times 2$ permutation matrix other than the identity

\begin{example}\label{ex:1}
We have already seen that, if $A$ is a multiple of the identity, then $f_A(Q)$ can have uncountably many local minima on $\mathcal{F}_2$. For a more interesting example of a non-normal input that is ``bad" in this sense, take $\displaystyle A = \begin{bmatrix}
    0&1\\
    0&0
\end{bmatrix} \in \C^{2 \times 2}$.
In this case \( \dim_\R \mathcal{F}_2 = 2 \) and, as representative of each equivalence class in the flag manifold, we can select a unitary matrix with real nonnegative diagonal elements. These matrices can be parametrized by two angles $\alpha \in [-\pi/2,\pi/2],\beta \in [0,2\pi]$ as
\begin{equation}\label{eq:Qab}
     Q(\alpha,\beta) = \begin{bmatrix}
    \cos(\alpha) & \sin(\alpha) e^{i \beta}\\
    -\sin(\alpha) e^{-i\beta} & \cos(\alpha)
\end{bmatrix}
\end{equation}
we obtain $f_A(\alpha,\beta)=\cos(\alpha)^4 + \sin(\alpha)^4$. The local minimizers are therefore only determined by $\alpha$, with $\beta$ free. Equivalently, in the language of Theorem \ref{thm:sard}, $A \in \mathcal{J}$. In particular, the global minima occur at $\alpha=\pm \frac{\pi}{4}$ corresponding to
\[   Q = \frac{1}{\sqrt{2}} \begin{bmatrix}
    1 & \pm e^{i \beta}\\
    \mp e^{-i \beta} & 1
\end{bmatrix}. \]
We note that this result also implies that the normal matrices nearest to $A$ are 
\[ N(\beta) = \frac12 \begin{bmatrix}
    0 & 1\\
    e^{-2i\beta} & 0
\end{bmatrix}, \qquad \mathrm{with} \ \|A-N(\beta)\|_F^2 = \frac12.\]
\end{example}

Note that one can always rotate $A \in \Cnn$ by unitary similarity to make it upper triangular \cite[Theorem 2.3.1]{HoJo}, shift it by a multiple of the identity to make it traceless, and scale it to make its top left element equal to either $0$ or $1$. This procedure constructs a canonical input $C:=k(Q^*AQ-cI) \in \Cnn$, where $Q \in U(n), c \in \C$ and $0 \neq k \in \C$. It is clear that normality is preserved under all these transformations. Moreover,
\[ \| A - N \|_F = \frac{1}{|k|} \| C - k(Q^*NQ-c I)\|_F,\]
and 
\[ f_C(U) =  |k|^2 f_A(QU)   \quad \forall \ U \in U(n).             \]
By these observations, there is a bijection between the sets of the normal matrices nearest to $A$ and of those nearest to $C$, and the local minimizers of $f_A$ on $\mathcal{F
}_n$ (and on $U(n)$) are also mapped by a simple bijection. When $n=2$, this means that the possible cases are parametrized by just one complex parameter $z$. Indeed, if $A$ has two equal eigenvalues, then either $C=0$, for which $f_C(Q)=0$ is constant and the nearest normal matrix is obviously $C$ itself, or  $C=\begin{bmatrix}
    0&1\\
    0&0
\end{bmatrix}$ (discussed in Example \ref{ex:1}). If instead $A$ has distinct eigenvalues then $C=\begin{bmatrix}
    1 & z\\
    0 & -1
\end{bmatrix}, z \in \C$. We analyze this last remaining case in Example \ref{ex:2} below.
\begin{example}\label{ex:2}
    Let 
    \[  A = \begin{bmatrix}
        1 & \rho e^{i \theta}\\
        0 & -1
    \end{bmatrix} \in \Cnn, \qquad \rho \geq 0, \  -\pi < \theta \leq \pi;   \]
    parametrizing $Q$ as in \eqref{eq:Qab} we obtain
    \[  f_A(\alpha,\beta)= \frac{\rho^2}{2} + \frac12 \left| 2 \sin(2\alpha) + \rho e^{i (\beta-\theta)} \cos(2 \alpha) \right|^2. \] 
    
    By computing the gradient and Hessian (we omit the details), one can see that the local minimizers on $\mathcal{F}_2$ are precisely

    \[ \left[Q\left(-\frac12\arctan\frac{\rho}{2},\theta \right)\right] \quad \text{and} \quad \left[Q\left(-\frac12\arctan\frac{\rho}{2},\theta \right) P \right]; \]
    they are also global minimizers.
    
   If instead $\rho=0$, the local (and global) minimizers  are given by  $\alpha \in \{ 0, \pm \frac{\pi}{2} \}$ and $\beta$ is free; however, in this case the freedom in $\beta$ does not correspond to infinitely many local minimizers on the flag manifold. Indeed, substituting in \eqref{eq:Qab}, we obtain the points $[I], \left[ P\right] \in \mathcal{F}_2$.
\end{example}

Our analysis above reveals that for $n=2$ the set of inputs $A$ for which $f_A$ has infinitely many local minimizers on $\mathcal{F}_n$ is
\[
   \{ A = c I,  \ c \in \C \} \cup \{A \ \mathrm{is} \ \mathrm{defective} \} \subsetneq \C^{2 \times 2}.   
\]

Another interesting feature of the $n=2$ case is that every local minimizer of $f_A(Q)$ is also a global minimizer. This can be seen by analyzing the Hessian of the stationary points computed above, and verifying that they are either global minima, saddle points, or maxima; we omit the details. This property is extremely attractive numerically, but unfortunately it does not generalize  to $n>2$.

\begin{example}\label{ex:3}
    Let
    \[ A=\begin{bmatrix}
        5-10i & 6+8i & 0\\
        10 & 10i-5 &3-4i\\
        0 & 5i & -5i
    \end{bmatrix} \in \C^{3 \times 3}.   \]
    Substituting this particular $A$ and $Q=I$ in \eqref{eq:fgrad}, it can be checked that the identity matrix is a stationary point of $f_A$. It is a marginally more laborious exercise to compute the Hessian of $f_A$ at the identity by using \eqref{eq:fhess_quotient}. We omit the tedious derivation and just mention that the outcome is a positive definite Hessian, proving that $I$ is a local minimum for $f_A$, corresponding of course to the normal matrix $\diag(A)$. By running our code (see Section \ref{sec:numexp}), we found the point (normalized to have positive diagonal)
    \[  Q_0 \approx \begin{bmatrix}
        0.6391   & 0.5163 + 0.2581i &  0.3594 - 0.3594i\\
  -0.0731 + 0.0366i &  0.7081  & -0.2218 + 0.6653i\\
  -0.5408 - 0.5408i &  0.1286 + 0.3858i  & 0.4998 
    \end{bmatrix} . \]
    Applying Theorem \ref{prop:equivalent}, we associate $Q_0$ with the normal matrix
    \[ N_0 \approx \begin{bmatrix}
        0.9386 - 1.9684i  & 5.0807 + 6.7195i &  1.8143 + 3.8169i\\
   8.4240 - 0.0329i & -0.7206 + 1.0916i  & 4.1969 - 5.5201i\\
  -3.8169 + 1.8143i & -0.0455 + 6.9342i & -0.2179 - 4.1231i
    \end{bmatrix},\]
    and $15.8114 \approx \sqrt{250} = \|A-\diag(A)\|_F =\sqrt{f_A(I)} > \sqrt{f_A(Q_0)}=\|A-N_0\|_F \approx 15.0887$.
\end{example}

\section{The real case}\label{sec:real}
Suppose $A \in \Rnn$, and that we are interested in the \emph{real} normal matrix $N \in \Rnn$ nearest to $A$. It is an open question \cite{Higham} whether $N$ is also always a nearest matrix to $A$ among all complex normal matrices\footnote{Empirically, we never observed a real input for which our real algorithm found a better approximation than our complex algorithm. On the other hand, we observed real inputs for which the best normal matrix found by the complex algorithm was nonreal and strictly closer to the input than the best matrix found by the real algorithm. These observations do not settle the open question, since neither algorithm is guaranteed to compute a global minimum.}. 
A na\"{i}ve approach would be to run our complex algorithm, or any iterative Riemannian optimization algorithm, with a real orthogonal matrix $Q \in O(n)$ as a starting point. Then, the algorithm would only employ real arithmetic, forcing every subsequent iterate to stay in $O(n)$. The problem with this approach is that it forces the eigenvalues of the corresponding normal matrices to stay real, and hence it must ultimately compute a symmetric matrix. As a consequence, the output of such a method cannot possibly be any closer to $A$ than $\frac{A+A^T}{2}$, which can of course be computed effortlessly, making it pointless to utilize the complex algorithm in this way.

Inspired by \cite[Section 4]{NP}, we now describe a more sophisticated real version of the algorithm. Proposition \ref{prop:realnormal} is known \cite{Chu,Gabriel79}, but we still include a short proof to keep the paper self-contained.

\begin{proposition}\label{prop:realnormal}
    A real matrix $A \in \Rnn$ is normal if and only if there exists an orthogonal matrix $Q$ such that $Q^T A Q =D$, where $D$ is a block diagonal real matrix whose diagonal blocks are either $1 \times 1$ matrices or $2 \times 2$ matrices of the form
    $ \begin{bmatrix}
        a & b\\
        -b & a
    \end{bmatrix}$.
\end{proposition}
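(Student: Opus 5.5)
The "if" direction is a direct check. If $Q^TAQ=D$ with $Q$ orthogonal, then $A=QDQ^T$ and $A^T=QD^TQ^T$, so $AA^T-A^TA=Q(DD^T-D^TD)Q^T$. Hence it suffices to show that $D$ is normal. Since $D$ is block diagonal, $DD^T$ and $D^TD$ are block diagonal as well, so we only need each diagonal block to be normal. Scalars are trivially normal. For a block $M=\begin{bmatrix} a & b\\ -b & a\end{bmatrix}=aI+bJ$, where $J$ is the rotation by $\pi/2$, we have $M^T=aI-bJ$. This commutes with $M$, so $M$ is normal.

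For the "only if" direction, let $A\in\Rnn$ be normal. By the unitary diagonalizability recalled in the introduction, $A=U\Lambda U^*$, so $\C^n$ splits into the orthogonal direct sum of the eigenspaces $E_\lambda=\ker(A-\lambda I)$. Because $A$ is real, conjugating the eigen-equation shows that $\overline{E_\lambda}=E_{\bar\lambda}$.

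1. For a real eigenvalue $\lambda$, the space $E_\lambda$ is conjugation-invariant. It therefore has a basis of real vectors, namely the real and imaginary parts of any complex basis, which together span it. Applying Gram--Schmidt over $\R$ gives a real orthonormal basis. These vectors contribute the $1\times 1$ blocks $\lambda$.

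2. For a nonreal eigenvalue $\lambda=a-ib$ with $b\neq 0$, pick an orthonormal basis $v_1,\dots,v_k$ of $E_\lambda$. Then $\bar v_1,\dots,\bar v_k$ is an orthonormal basis of $E_{\bar\lambda}$. Since $\lambda\neq\bar\lambda$, normality gives $E_\lambda\perp E_{\bar\lambda}$, so $v_j^Tv_l=\langle v_j,\bar v_l\rangle=0$ for all $j,l$.

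3. Write $v_j=(x_j+iy_j)/\sqrt{2}$ and consider the real vectors $x_j,y_j$. Expanding the relations $v_j^*v_l=\delta_{jl}$ and $v_j^Tv_l=0$ into real and imaginary parts shows that $\{x_j,y_j\}_j$ is a real orthonormal set. This expansion is the main bookkeeping step, and the normalization by $\sqrt{2}$ is chosen exactly so that it works.

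4. Taking real and imaginary parts of $Av_j=(a-ib)v_j$ gives $Ax_j=ax_j+by_j$ and $Ay_j=-bx_j+ay_j$. In the basis $(x_j,y_j)$, $A$ therefore acts by the block $\begin{bmatrix} a&-b\\ b&a\end{bmatrix}$. This is of the required form after replacing $b$ by $-b$, or equivalently after swapping the order of $x_j$ and $y_j$.

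Each conjugate pair $\{\lambda,\bar\lambda\}$ is handled once, and eigenspaces for different eigenvalues are mutually orthogonal. Collecting all these real orthonormal vectors gives $2\dim E_\lambda$ vectors spanning $E_\lambda\oplus E_{\bar\lambda}$ for each pair, and $\dim E_\lambda$ vectors for each real $\lambda$. Their total number is $n$, so they form the columns of an orthogonal $Q$ with $Q^TAQ=D$ of the stated form.

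The only step that requires care is step 2: the orthogonality $v_j^Tv_l=0$, which comes from $E_\lambda\perp E_{\bar\lambda}$ under normality, is exactly what makes the real and imaginary parts orthonormal. Without normality this step fails.
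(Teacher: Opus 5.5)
Your proof is correct and follows essentially the paper's route. The "if" direction uses commuting $2\times 2$ blocks ($aI+bJ$ versus the paper's field isomorphism $\varphi$). The "only if" direction takes real and imaginary parts, scaled by $\sqrt{2}$, of conjugate pairs of orthonormal eigenvectors.

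The differences are minor. You verify orthonormality of $\{x_j,y_j\}$ by expanding $v_j^*v_l=\delta_{jl}$ and $v_j^Tv_l=0$, whereas the paper right-multiplies $[\,u\ \ \bar u\,]$ by the unitary $\frac{1}{\sqrt{2}}\begin{bmatrix}1&-i\\1&i\end{bmatrix}$. You also justify explicitly, via $\overline{E_\lambda}=E_{\bar\lambda}$ and $E_\lambda\perp E_{\bar\lambda}$, the choice of real and conjugate eigenvectors that the paper simply takes ``without loss of generality''.
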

\begin{proof}
    Note the field isomorphism
    \[ \varphi: \C \mapsto \varphi(\C) \subset \R^{2 \times 2}, \quad \varphi(a+ib)=\begin{bmatrix}
        a&b\\
        -b&a
    \end{bmatrix}. \]
   In particular all $2 \times 2$ matrices of that form commute. This immediately implies that, if $A$ satisfies the stated equation, then $AA^T=A^TA$.

    Suppose now that $A$ is normal, and write its spectral decomposition $A=U \Lambda U^*$, where without loss of generality the eigenvalues in $\Lambda$ are ordered so that the real eigenvalues come first, and the complex conjugate pairs are grouped together. If $A$ has $n-2m$ real eigenvalues, setting $\phi(j)=n-2m+2j-1$ we can then write 
    \[ A=\sum_{i=1}^{n-2m} \lambda_i u_i u_i^* +  \sum_{j=1}^m \left( \lambda_{\phi(j)} u_{\phi(j)} u_{\phi(j)}^* + \overline{\lambda}_{\phi(j)} u_{\phi(j)+1} u_{\phi(j)+1}^* \right).\]
    Moreover, we can assume without loss of generality that $u_1, \dots, u_{n-2m} \in \R^n$ and that, for all $j=1,\dots,m$, $u_{\phi(j)+1}=\overline{u_{\phi(j)}}$. It now suffices to observe that, if $\lambda_{\phi(j)}=:a+ib$ and $u_{\phi(j)}=x+iy$ where $a,b \in \R$ and $x,y \in \R^n$, then
\[  \begin{bmatrix}
    x+iy & x-iy
\end{bmatrix}  \begin{bmatrix}
    a+ib&0\\
    0&a-ib
\end{bmatrix}\begin{bmatrix}
    x^T-i y^T\\
    x^T + i y^T
\end{bmatrix} = \begin{bmatrix}
    \sqrt{2} \cdot x & \sqrt{2} \cdot  y
\end{bmatrix} \begin{bmatrix}
    a&b\\
    -b &a
\end{bmatrix} \begin{bmatrix}
    \sqrt{2}  \cdot x^T\\
    \sqrt{2} \cdot y^T
\end{bmatrix}  \]
and
\[ \begin{bmatrix}
  x+iy & x-iy  
\end{bmatrix} \left(\begin{bmatrix}
    1&-i\\
    1&i
\end{bmatrix}/\sqrt{2}\right)  = \begin{bmatrix}
   \sqrt{2} \cdot x & \sqrt{2} \cdot y
\end{bmatrix}  \]
so that unitarity is preserved.
\end{proof}

Proposition \ref{prop:realnormal} implies that
minimizing $\|A-N\|_F^2$ over real normal matrices $N$ is equivalent to 
minimizing, over the manifold of $n \times n$ orthogonal matrices $O(n)$, the function 
\begin{equation}
    \label{def:freal} 
    f(Q) = \| \mathcal{N}_\R(Q^T A Q)\|_F^2
\end{equation}
where $\mathcal{N}_\R(M)=M-\Pi(M)$ and $\Pi(M)$ projects onto a block diagonal matrix having $2 \times 2$ blocks (plus one $1 \times 1$ block at the bottom if $n$ is odd). The diagonal blocks of $\Pi(M)$ are defined from the corresponding blocks of $M$ as follows: If $B \in \R$ is a $1 \times 1$ block,  then the projected block is $\Pi(B)=B$ itself, while if the block is $B=\begin{bmatrix}
    M_{11} & M_{12}\\
    M_{21} & M_{22}
\end{bmatrix} \in \R^{2 \times 2}$,  then the projected block is
\[ \Pi(B)= \begin{cases}
\frac12 \begin{bmatrix}
     M_{11}+M_{22} & M_{12}-M_{21}\\
     M_{21}-M_{12} & M_{11}+M_{22}
 \end{bmatrix}   & \mathrm{if} \ |M_{11}-M_{22}|<|M_{12}-M_{21}|  \ ;\\
  \begin{bmatrix}
     M_{11}&0\\
     0&M_{22}
 \end{bmatrix}  & \mathrm{otherwise}.
\end{cases}    \]
For example,
\[  \Pi\left( \begin{bmatrix}
    1&2&24&23&21\\
    3&4&8&11&13\\
    5&0&12&10&15\\
    6&22&16&14&17\\
    7&20&19&18&9
\end{bmatrix} \right) =  \begin{bmatrix}
    1&0&0&0&0\\
    0&4&0&0&0\\
    0&0&13&-3&0\\
    0&0&3&13&0\\
    0&0&0&0&9
\end{bmatrix}.  \]
\begin{remark}
    Reducing the dimension of the search space by considering a quotient manifold of $O(n)$, similarly to what we did in the complex case by considering the flag manifold, is not viable for the real algorithm. The reason is that the correct choice of the quotient depends on the number of real eigenvalues of a nearest real normal matrix, and this information is not a priori available. Such refinement is, however, possible in the scenario considered in \cite{Chu}, where the target spectrum \emph{is} assumed to be known.
\end{remark}
\subsection{Real gradients and Hessians}

Our opening move is to reformulate Lemma \ref{lm:costgradient} and Corollary \ref{cor:Egradf} in the real setting. We omit the analogous proofs of Lemma \ref{lm:costgradientreal} and Corollary \ref{lm:realgradient}.

\begin{lemma}
    \label{lm:costgradientreal}
    Let $A \in \Rnn$ and let $h\colon O(n)\to \R$ be a function of the form $h(Q) = \|\mathcal{P}(Q^TAQ)\|^2_F$, where $\mathcal{P}$ is a linear orthogonal (in the Frobenius inner product) projection defined in a neighborhood of $Q\in O(n).$ Then, the Euclidean gradient of $h$ at $Q$ is
         \begin{equation*}
             \nabla h(Q) = 2(A^TQS + AQS^T),
         \end{equation*}
         and its Riemannian gradient in $Q$ equals to 
         \begin{equation*}
             \grad h(Q) = 2Q\skeww([B^T, S]),
         \end{equation*}
         where $B = Q^TAQ,\, S = \mathcal{P}(B),$ and $\skeww(X) = \frac{1}{2}(X - X^T).$
\end{lemma}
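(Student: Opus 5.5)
The argument repeats the proof of Lemma \ref{lm:costgradient} almost word for word, with $^*$ replaced by $^T$ and $\Re\tr$ by $\tr$. The Frobenius inner product on $\Rnn$ is $\langle X,Y\rangle=\tr(Y^TX)$. The only structural inputs are that $\mathcal{P}$ is linear and an orthogonal projection, so $\mathcal{P}^2=\mathcal{P}$ and $\langle \mathcal{P}(X),Y\rangle=\langle X,\mathcal{P}(Y)\rangle$. Both hold for the projection $\mathcal{N}_\R$ appearing in \eqref{def:freal} on each open region where the case selection in $\Pi$ is locally constant. This is why the hypothesis only asks for $\mathcal{P}$ to be a linear orthogonal projection in a neighborhood of $Q$.

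The first step is the differential. With $B=Q^TAQ$ and $S=\mathcal{P}(B)$, we have $h=\tr(SS^T)$. Hence
\[ dh = 2\tr(S^T\,d\mathcal{P}(B)) = 2\langle \mathcal{P}(dB),S\rangle = 2\langle dB,\mathcal{P}(S)\rangle = 2\tr(S^T dB), \]
using self-adjointness and idempotence of $\mathcal{P}$. Substituting $dB = dQ^TAQ+Q^TA\,dQ$ and using the cyclic property of the trace together with $\tr(X)=\tr(X^T)$, both terms can be put in the form $\tr(G^T dQ)$. This gives $dh=2\tr((AQS^T+A^TQS)^T dQ)$, so
\[ \nabla h(Q)=2(A^TQS+AQS^T)=2Q(B^TS+BS^T). \]
The last equality uses $A=QBQ^T$.

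The second step is the Riemannian gradient. The tangent space of $O(n)$ at $Q$ is $\{Q\Omega:\Omega=-\Omega^T\}$, and the orthogonal projection onto it is $X\mapsto Q\skeww(Q^TX)$. Applying this to $\nabla h(Q)$ gives
\[ \grad h(Q)=Q\bigl(B^TS+BS^T-S^TB-SB^T\bigr). \]
Grouping terms, this equals $Q\bigl([B^T,S]-[B^T,S]^T\bigr)=2Q\skeww([B^T,S])$, as claimed.

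There is no real obstacle here. The only point needing care is justifying local linearity of $\mathcal{P}$ where $\Pi$ switches cases, and that is exactly what the hypothesis assumes. The rest is bookkeeping of transposes.
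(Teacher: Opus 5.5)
Your proposal is correct and takes the same approach as the paper, which omits this proof as analogous to Lemma~\ref{lm:costgradient}. Your computation ($dh = 2\tr(S^T dB)$ via self-adjointness and idempotence of $\mathcal{P}$, then projection by $X\mapsto Q\skeww(Q^TX)$) is exactly that complex-case argument transcribed with transposes, and your remark that each branch of $\mathcal{N}_\R$ is a genuine orthogonal projection correctly justifies its use in Corollary~\ref{lm:realgradient}.
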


\begin{corollary}
\label{lm:realgradient}
Let $f_A: O(n) \to \R$ be defined as in \eqref{def:freal}, and define $B=Q^T A Q$ and $S=\mathcal{N}_\R(B)$. Then, 
    \begin{equation*}
        \label{eq:fgradreal}
        \nabla f(Q) = 2 Q(B^TS + BS^T), \qquad \grad f(Q) = 2Q\skeww([B^T, S]),
    \end{equation*}
    for every $Q\in O(n)$ in the open set on which the branch defining \(\mathcal N_\R \) is locally constant.
\end{corollary}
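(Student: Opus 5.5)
The plan is to replay the proofs of Lemma \ref{lm:costgradient} and Corollary \ref{cor:Egradf} in real arithmetic, the only new issue being the piecewise definition of $\mathcal N_\R$. Fix $Q_0\in O(n)$ in the open set where the branch defining $\Pi$ is locally constant. By definition this means that for every $2\times 2$ diagonal block the strict inequality $|M_{11}-M_{22}|<|M_{12}-M_{21}|$, or its strict reverse, holds at $M=Q_0^TAQ_0$. Then it persists for all $Q$ near $Q_0$ by continuity of $Q\mapsto Q^TAQ$. Consequently, on a neighborhood of $Q_0$ the map $\Pi$ is one fixed linear map. It acts blockwise either as the projection onto $\varphi(\C)$ (the span of $I_2$ and $\left[\begin{smallmatrix}0&1\\-1&0\end{smallmatrix}\right]$) or as the projection onto $2\times 2$ diagonal matrices. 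Each of these is an orthogonal projection in the Frobenius inner product $\langle X,Y\rangle=\tr(Y^TX)$: one averages onto an orthonormal basis, and the other zeroes the off-diagonal entries. A direct sum of orthogonal projections on orthogonal blocks, extended by zero off the block diagonal, is again an orthogonal projection, and hence so is $\mathcal N_\R=I-\Pi$. Thus, locally, $f$ has the form $h(Q)=\|\mathcal P(Q^TAQ)\|_F^2$ with $\mathcal P=\mathcal N_\R$ linear, idempotent and self-adjoint.

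It then suffices to verify Lemma \ref{lm:costgradientreal}, which I would do exactly as in the complex case with $^*$ replaced by $^T$ and $\Re\tr$ by $\tr$. Set $B=Q^TAQ$ and $S=\mathcal P(B)$. Using $\mathcal P^2=\mathcal P$ and self-adjointness,
\[ dh=2\tr(S^T dB)=2\tr\big(S^T(dQ^TAQ+Q^TA\,dQ)\big)=2\tr\big((A^TQS+AQS^T)^T dQ\big). \]
This gives $\nabla h(Q)=2(A^TQS+AQS^T)=2Q(B^TS+BS^T)$. Next, project onto $\mathcal T_QO(n)=\{Q\Omega:\Omega=-\Omega^T\}$ via $\proj(X)=Q\skeww(Q^TX)$. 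This yields $Q(B^TS+BS^T-S^TB-SB^T)=2Q\skeww([B^T,S])$, since $BS^T-SB^T$ is already skew and $\skeww(BS^T)=\skeww(-SB^T)$.

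Applying this with $\mathcal P=\mathcal N_\R$ gives the corollary on the open set. The only step requiring care is the first one: checking that the local branch is genuinely a linear orthogonal projection, and in particular that the $\varphi(\C)$-branch formula $\tfrac12(M_{11}+M_{22})$, $\tfrac12(M_{12}-M_{21})$ is the orthogonal projection. I would check this by noting that $I_2/\sqrt2$ and $J/\sqrt2$, with $J=\left[\begin{smallmatrix}0&1\\-1&0\end{smallmatrix}\right]$, are orthonormal with $\langle M,I_2\rangle=M_{11}+M_{22}$ and $\langle M,J\rangle=M_{12}-M_{21}$. Everything else is the routine calculation already carried out in the complex case.
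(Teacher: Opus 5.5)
Your proposal is correct and follows the paper's route, which it leaves implicit by calling the proof analogous to the complex case. On the given open set $\mathcal N_\R$ is a single linear orthogonal projection, and you rightly check this blockwise, including the $\varphi(\C)$ branch; so Lemma~\ref{lm:costgradientreal}, proved as Lemma~\ref{lm:costgradient} with conjugate transposes replaced by transposes, gives both formulas.

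One harmless slip: the set where the branch is locally constant can be larger than the set where the strict inequalities hold. For example, for symmetric $A$ the diagonal branch is used everywhere, even where $M_{11}=M_{22}$. Your argument only needs $\Pi$ to be one fixed linear map near $Q_0$, which is exactly the hypothesis.
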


\begin{remark}
  The statement of Corollary \ref{lm:realgradient} correctly hints at the lack of global differentiability of the objective function $f_A$ in \eqref{def:freal}. However, $f_A$ is locally Lipschitz, and hence it is almost everywhere differentiable by Rademacher's Theorem.  To refine this observation, note that the operator $\mathcal{N}_\R$ acts as a piecewise linear projection onto finitely many (more precisely, $2^{\lfloor n/2 \rfloor}$) possible subspaces of $\Rnn$. The choice between those subspaces is determined by the sign of $|M_{11}-M_{22}| - |M_{12}-M_{21}|$, evaluated on the relevant $2 \times 2$ principal submatrices $\displaystyle M=\begin{bmatrix}
      M_{11} & M_{12}\\
      M_{21} & M_{22}
  \end{bmatrix}$ of $Q^TAQ$. Within the open and dense (for generic $A$) subset of $O(n)$ where these signs are nonzero, $\mathcal{N}_\R$ behaves as an orthogonal projection onto a fixed subspace, and thus $f_A$ is smooth.
\end{remark}

We now deduce a formula for the Riemannian Hessian in the real case. Again, the proof of Corollary \ref{lm:realhess} is omitted because it is analogous to that of Corollary \ref{cor:Rhessian}.

 \begin{corollary}
      \label{lm:realhess}
            Let $f_A$ be as in \eqref{def:freal} and let $Q\in O(n)$ be in the open set on which the branch defining \(\mathcal N_\R \) is locally constant. Then, for every $Z=Q \Omega \in \mathcal{T}_Q O(n)$, where $\Omega=-\Omega^T$, the Riemannian Hessian of $f_A$ on $O(n)$ is 
            \begin{equation*}
                \hess f(Q)[Q\Omega] = 2Q\skeww(\Omega\skeww([B^T, S]) + [[B, \Omega]^T, S] + [B^T, \mathcal{N}_{\R,B}([B, \Omega])]),
            \end{equation*}
            where $\skeww(X) = \frac{1}{2}(X - X^T)$ and $\mathcal{N}_{\R,B}$ denotes the branch of the piecewise linear projection $\mathcal{N}_\R$ selected according to $B$. 
 \end{corollary}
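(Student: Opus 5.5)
The plan is to mimic the proof of Corollary \ref{cor:Rhessian}, with $U(n)$ replaced by $O(n)$ and $\NN$ replaced by the locally constant branch $\mathcal{N}_{\R,B}$. First I fix $Q\in O(n)$ in the open set where all the signs $|M_{11}-M_{22}|-|M_{12}-M_{21}|$ on the relevant $2\times 2$ diagonal blocks of $Q^TAQ$ are nonzero. By continuity of $Q\mapsto Q^TAQ$, the same branch is selected on a whole neighborhood $\mathcal{V}$ of $Q$ in $\Rnn$, not just in $O(n)$. On $\mathcal{V}$, $\mathcal{N}_\R$ coincides with a fixed linear map $\mathcal{N}_{\R,B}$. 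This map is an orthogonal projection: it is $I-\Pi_B$, where $\Pi_B$ acts blockwise either as the orthogonal projection onto diagonal $2\times2$ matrices or as the orthogonal projection onto the subspace $\varphi(\C)$ of Proposition \ref{prop:realnormal}, and as the identity on $1\times1$ blocks. Hence Lemma \ref{lm:costgradientreal} and Corollary \ref{lm:realgradient} apply on $\mathcal{V}\cap O(n)$.

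Next I take the smooth extension $G(Q)=2Q\skeww([B^T,S])$ with $B=Q^TAQ$ and $S=\mathcal{N}_{\R,B}(B)$, both defined for all $Q\in\mathcal{V}$. Then $G$ is smooth because $\mathcal{N}_{\R,B}$ is a fixed linear map there. I apply Lemma \ref{lm:rgrad2rhess} with $\mathcal{M}=O(n)\subset\R^{n^2}$. For $Z=Q\Omega$ with $\Omega=-\Omega^T$, the product rule gives
\[\delta G(Q)[Q\Omega]=2Q\Omega\skeww([B^T,S])+2Q\skeww([\delta B^T,S]+[B^T,\delta S]).\]
Here $\delta B=\Omega^TB+B\Omega=[B,\Omega]$, so $\delta B^T=[B,\Omega]^T$. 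By linearity of the frozen branch, $\delta S=\mathcal{N}_{\R,B}([B,\Omega])$.

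Finally I project onto $\mathcal{T}_QO(n)$ using $\proj(X)=Q\skeww(Q^TX)$. Since $Q^TQ=I$, this yields exactly the stated formula.

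The main point needing care is the freezing step. It requires that the branch is constant on a Euclidean neighborhood, so that $G$ is a genuine smooth extension and the derivative of $\mathcal{N}_\R$ is just $\mathcal{N}_{\R,B}$ with no extra terms. It also requires checking that each frozen branch is a Frobenius-orthogonal projection. The orthogonality of $\varphi(\C)$-averaging follows because the map $M\mapsto\frac12(M+JMJ^T)$, with $J$ the $2\times2$ rotation by $\pi/2$, is self-adjoint and idempotent. Everything else is the same routine computation as in Corollary \ref{cor:Rhessian}.
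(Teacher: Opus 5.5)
Your proposal is correct and takes the same route as the paper. The paper omits this proof as analogous to that of Corollary~\ref{cor:Rhessian}: extend $\grad f$ by $G(Q)=2Q\skeww([B^T,S])$, differentiate using $\delta B=[B,\Omega]$, and project with $Q\skeww(Q^T\cdot)$, exactly as you do. Your check that each frozen branch $\mathcal{N}_{\R,B}$ is a Frobenius-orthogonal projection fills in a detail the paper leaves implicit. One small point: you do not need the branch to be constant on a Euclidean neighborhood, which is why you restricted to nonzero signs. The map $G$ built from the fixed linear map $\mathcal{N}_{\R,B}$ is smooth on all of $\Rnn$ and agrees with $\grad f$ on the $O(n)$-neighborhood of $Q$ where the branch is constant, so your argument covers the full hypothesis of the statement.
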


\section{Algorithmic aspects}\label{sec:numexp}

\subsection{Implementation details}\label{sec:implement}

\subsubsection{Choice of optimization algorithm}

 In our practical implementation, we use the Riemannian trust-region method in Manopt \cite{Manopt}. We have compared its performance against other solvers available in the Manopt toolbox, including adaptive regularization by cubics (ARC) and first-order methods such as conjugate gradient. The trust-region algorithm outperformed these alternatives on both Gaussian random matrices and perturbed normal matrices.

\subsubsection{Starting point}
The algorithms described in Section~\ref{sec:riemann} converge in practice to a local minimizer of the objective function $  f_A  $. Hence, the robustness of the method and the likelihood of reaching the basin of attraction of a global minimizer are improved by selecting an initial point that is already close to such a minimizer.
One suitable choice is provided by the Schur decomposition of the matrix $  A  $ \cite[Theorem~2.3.1]{HoJo}. This yields a unitary (respectively, orthogonal) matrix $  Q_A  $ such that $  Q_A^* A Q_A  $ is upper-triangular (respectively, quasi-upper-triangular). Numerical experiments on perturbed normal matrices show that initializing with the Schur factor $  Q_A  $ approximately halves the average runtime relative to both a random starting point and the identity matrix.

However, while the Schur initialization outperforms the random initialization for generic matrices, it might still fail to reach the global minimum's basin of attraction. Most notably, for Frank matrices which are used as an example in \cite{Ruhe}, we have observed that the Schur initialization typically fails to find a global minimum.

\subsubsection{Choice of manifold}
As discussed in Section \ref{sec:riemann}, our algorithm can be implemented indifferently on $U(n)$ or on $\mathcal{F}_n$. The lower dimension of $\mathcal{F}_n$ is potentially advantageous; moreover, over $U(n)$, the Hessian of the objective function $f_A$ is only positive semidefinite (as opposed to definite) at a local minimum, because by Proposition \ref{prop:symmetries} $f_A$ is constant along a submanifold of dimension $n$. Points in favor of $U(n)$ include the fact the flag manifold is not native in Manopt, and a manual implementation of its geometry may cause overheads; moreover, further algebraic computations are needed to project quantities onto the horizontal space (see Corollary \ref{cor:Rhessian}).

A priori, it is unclear which of these aspects is the most relevant. In Table \ref{tab:UnvsFn}, we evaluated numerically, in terms of both number of iterations and running time, the performances of two implementations of our method that are identical except for the choice of the underlying manifold. We tested them on three types of input: Realizations of Gaussian random matrices, normal matrices of norm $1$ plus a perturbation of norm $10^{-3}$, and matrices of the form $V J_n(0) V^{-1}$, where $V$ is a similarity matrix having condition number $10$ and $J_n(0)$ is a nilpotent Jordan block. This experiment was essentially inconclusive. We observed both specific inputs where $\mathcal{F}_n$ outperformed  $U(n)$ and vice versa. On average, no clear trend emerged. The peak average performance gap (in favor of $\mathcal{F}_n$) was $\approx 7.4 \%$, for $90 \times 90$ Gaussian matrices. In our broader numerical experiments below, we default to using $U(n)$, because its implementation in Manopt is more straightforward. However, we cannot exclude that $\mathcal{F}_n$ may be preferable for specific instances of the problem, and for this reason our code leaves to the user the option to launch the implementation on the flag manifold.

\begin{table}[htpb]
\centering
\resizebox{\textwidth}{!}{
\begin{tabular}{c cccc cccc cccc}
\toprule
 & \multicolumn{4}{c}{Gaussian matrices} & \multicolumn{4}{c}{Perturbed normal} & \multicolumn{4}{c}{Nilpotent matrices} \\
\cmidrule(lr){2-5} \cmidrule(lr){6-9} \cmidrule(lr){10-13}
 & \multicolumn{2}{c}{Iterations} & \multicolumn{2}{c}{Time (s)} & \multicolumn{2}{c}{Iterations} & \multicolumn{2}{c}{Time (s)} & \multicolumn{2}{c}{Iterations} & \multicolumn{2}{c}{Time (s)} \\
\cmidrule(lr){2-3} \cmidrule(lr){4-5} \cmidrule(lr){6-7} \cmidrule(lr){8-9} \cmidrule(lr){10-11} \cmidrule(lr){12-13}
$n$ & $U(n)$ & $\mathcal{F}_n$ & $U(n)$ & $\mathcal{F}_n$ & $U(n)$ & $\mathcal{F}_n$ & $U(n)$ & $\mathcal{F}_n$ & $U(n)$ & $\mathcal{F}_n$ & $U(n)$ & $\mathcal{F}_n$ \\
\midrule
$30$ & 504.6 & 509.3 & 0.94 & 0.94 & 177.8 & 178.0 & 0.33 & 0.32 & 782.1 & 793.9 & 1.50 & 1.52 \\
$60$ & 1209.8 & 1233.5 & 3.65 & 3.70 & 612.5 & 618.5 & 1.84 & 1.84 & 2147.3 & 2091.4 & 6.79 & 6.76 \\
$90$ & 2013.9 & 1874.7 & 9.19 & 8.57 & 1156.2 & 1156.4 & 5.18 & 5.23 & 2966.5 & 2993.0 & 15.01 & 15.59 \\
\bottomrule
\end{tabular}
}
\caption{Performance comparison between $U(n)$ and $\mathcal{F}_n$. Results are averaged over $100$ randomly generated inputs for each size and type. Initialization via Schur decomposition.}
\label{tab:UnvsFn}
\end{table}

\subsection{Numerical experiments}\label{sec:numexpproper}

We are not aware of any established benchmark test set for the nearest normal matrix. However, some analytic examples have appeared in \cite{Barl,GS,Ruhe}. We report in Table \ref{tab:prevex} a comparison of our algorithm with the results given in \cite{GS}. Remarkably, even for small-size inputs, our method strictly improved the state of the art for three out of four input matrices.

\begin{table}[htpb]
\centering
\resizebox{0.7\textwidth}{!}{%
\begin{tabular}{c cccc cc}
\toprule
 & \multicolumn{4}{c}{$\|A - A_*\|_F$} & \multicolumn{2}{c}{Time, s} \\
\cmidrule(lr){2-5} \cmidrule(lr){6-7}
$A$ & Ruhe & GS & NZC & NZR & NZC & NZR \\
\midrule
$J_7$, \cite{Ruhe}& \textbf{0.926} & N/A & \textbf{0.926} & \textbf{0.926} & 0.02 & 0.01\\
$F_{12}$, \cite{Ruhe} & 22.586 & N/A & 22.582 & \textbf{22.581} & 0.19 & 0.12\\
Ex. 1, \cite{GS} & 0.959$^\dagger$ & 1.067$^\ddagger$ & \textbf{0.842} & \textbf{0.842} & 0.01 & 0.004\\
Ex. 2, \cite{Barl} & N/A & 22.267 & \textbf{22.263} & N/A & 0.045 & N/A\\
\bottomrule
\end{tabular}%
}

\smallskip
\begin{minipage}{0.7\textwidth}
{\footnotesize
$^\dagger$ According to \cite{GS}; the example is not included in \cite{Ruhe}.\\
$^\ddagger$ \cite{GS} considers only the nearest real normal matrix problem.
}
\end{minipage}

\caption{Comparison between methods. $A_*$ denotes the computed approximation; distances are in boldface when they are the best within a relative threshold of $10^{-6}$. GS denotes the method from \cite{GS}, Ruhe denotes the method from \cite{Ruhe}, whereas NZC and NZR refer to the complex and the real versions of our algorithm, respectively, run with a randomly generated starting point.}
\label{tab:prevex}
\end{table}

\subsubsection{Empirical frequency of nonglobal minima}

Example \ref{ex:3} shows that, for $n \geq 3$, the function $f_A$ might have nonglobal local minima, implying that our algorithm cannot guarantee to find the exact distance. We have empirically analyzed the occurrence of this phenomenon for small $n$ (for reasons of efficiency; this experiment is computationally very demanding given its statistical nature). For each size, we generated $500$ random Gaussian complex inputs and ran the algorithm from $100$ random starting points per input. We report the outcome in Table \ref{tab:mult} below: It frequently happens that the algorithm converges always to the same local minimum (quite possibly the global one); moreover, when there are multiple minima, the probability of converging to the best one from a random point is high.

\begin{table}[htpb]
\centering
\begin{tabular}{c cc}
\toprule
$n$ & $\%$ one minimum &
$\%$ best when multiple minima \\
\midrule
$3$  & $99\%$ & $75\%$ \\
$5$  & $97\%$ & $64\%$ \\
$10$ & $90\%$ & $80\%$ \\
\bottomrule
\end{tabular}
\caption{Empirical occurrence of multiple local minima for Gaussian inputs. From left to right, the columns report the input size, the fraction of inputs for which the algorithm always converged to the same local minimizer, and the probability that the algorithm converged to the best observed minimizer (conditioned on having observed at least two distinct minimizers).}
\label{tab:mult}
\end{table}

\subsubsection{Empirical complexity}

We tested both algorithms on randomly generated matrices of sizes varying from 4 to 256. For each size, 300 inputs were tested, generated as random normal matrices with the addition of 1\% Gaussian noise. Figure \ref{fig:generaltiming} presents the results for the complex version of the algorithm and figure \ref{fig:realtiming} presents the results for the real version. The tests were run on a Macbook Air machine with an Apple M2 chip.
\begin{figure}[htpb]
    \centering
    \includegraphics[width=0.8\linewidth]{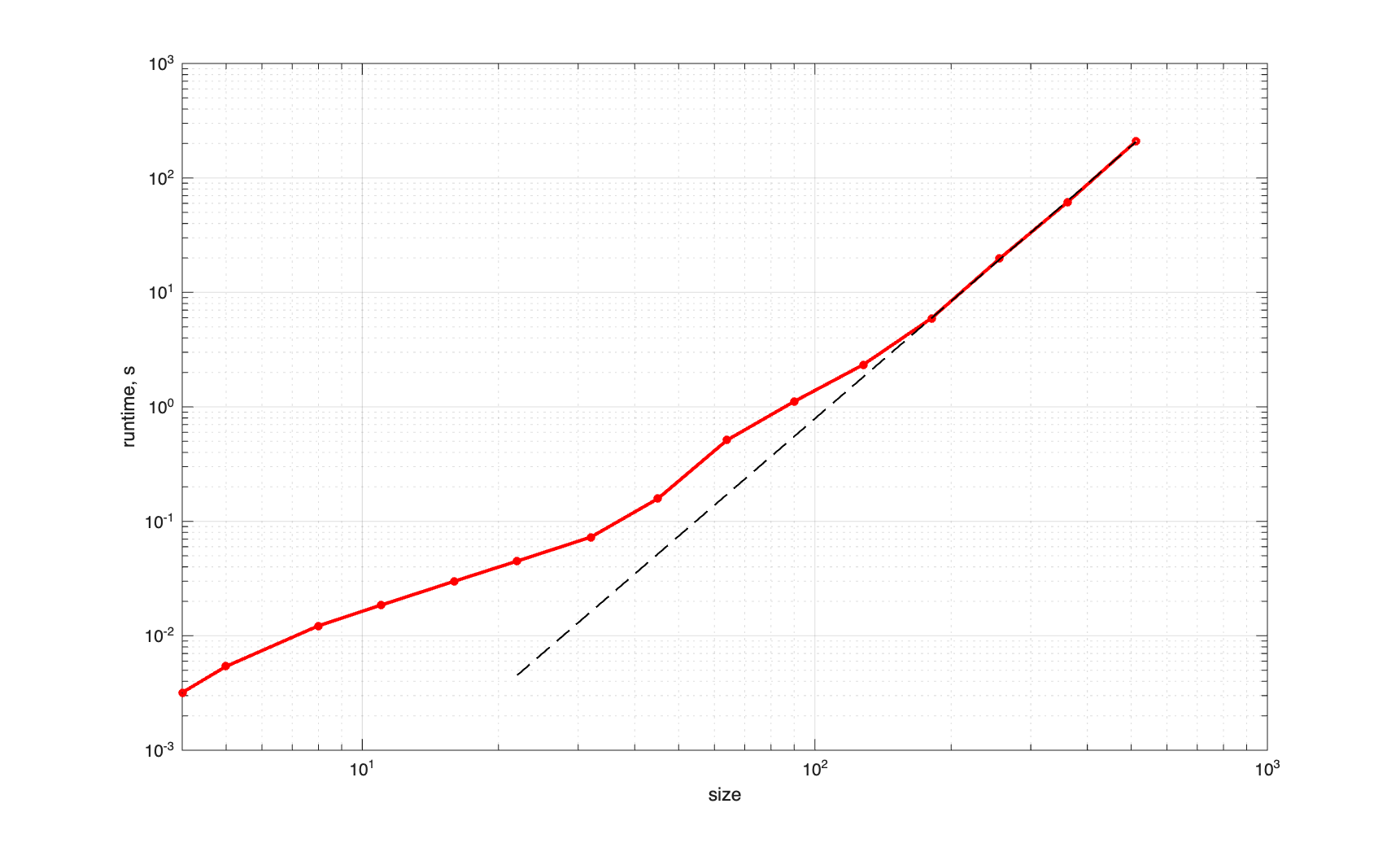}
    \caption{Median runtime for the complex algorithm. The dashed line represents a polynomial fit of the tail, numerically computed as $O(n^{3.4}).$}
    \label{fig:generaltiming}
\end{figure}
\begin{figure}[htpb]
    \centering
    \includegraphics[width=0.8\linewidth]{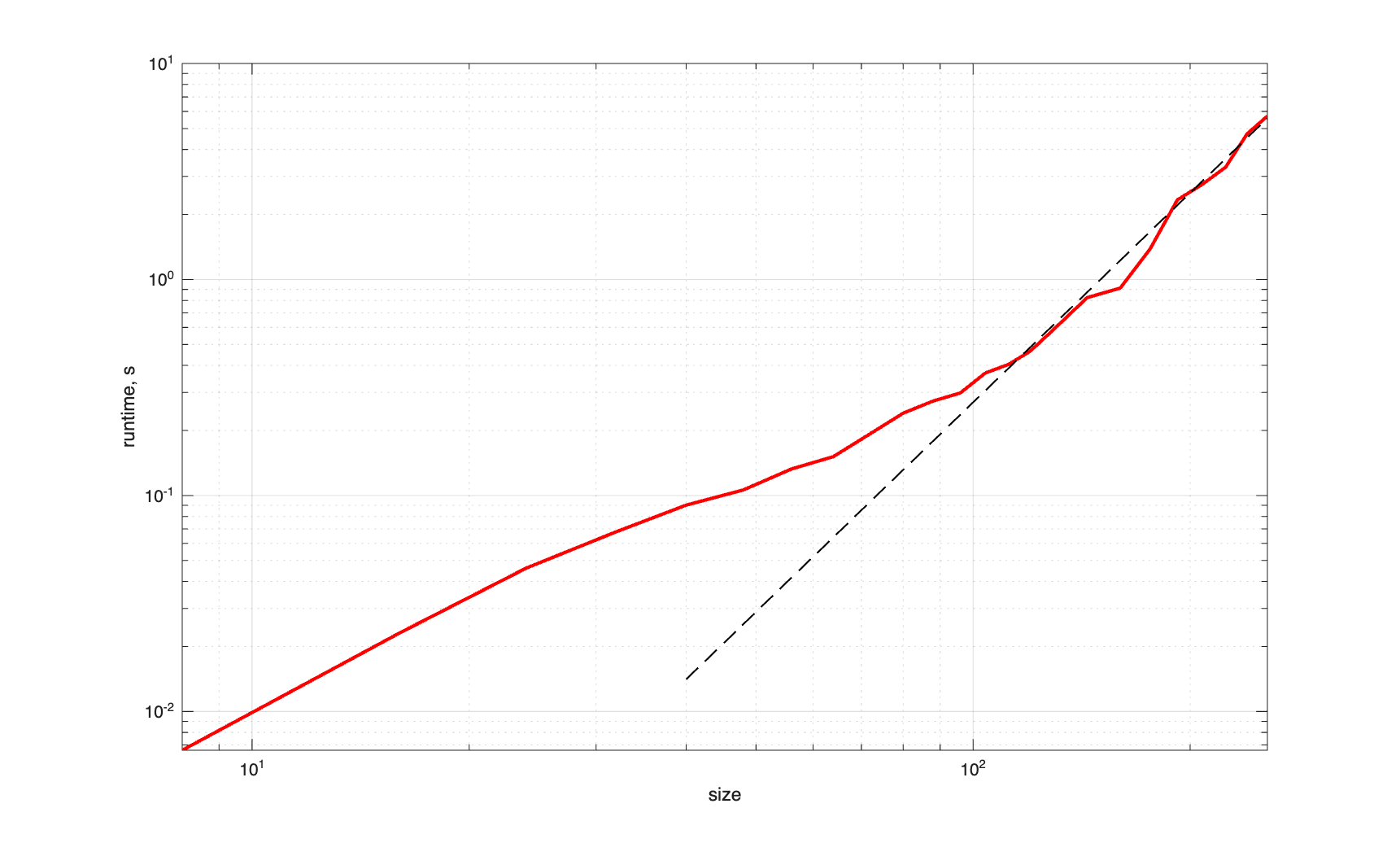}
    \caption{Median runtime for the real algorithm. The dashed line represents a polynomial fit of the tail, numerically computed as $O(n^{3.25}).$}
    \label{fig:realtiming}
\end{figure}

\section{Conclusions}\label{sec:conclusions}

We revisited the classical nearest normal matrix problem using modern Riemannian optimization, deriving a practical method that substantially improves on classical algorithms and can handle considerably larger matrices. We also developed new theory, and in particular we proved that, for generic inputs, the local minimizers on the flag manifold are isolated and finitely many, implying an analogous result on the original nearest normal matrix problem. A practical question for future research concerns more reliable initialization strategies, especially in the real case.

\section*{Acknowledgements} We are grateful to Mark Embree, whose comments during the Householder Symposium XXII inspired this paper, to Federico Poloni for comments on the manuscript, and to Kyle Bierly for bringing \cite{Kyle} to our attention.


\begin{thebibliography}{9}

\bibitem{ABG07} P.-A. Absil, C. Baker and K. Gallivan.
\newblock {\em Trust-region methods on Riemannian manifolds}.
\newblock Found. Comput. Math. 7(3), 303--330, 2007.

\bibitem{AMS08} P.-A. Absil, R. Mahony and R. Sepulchre. 
\newblock {\em Optimization Algorithms on Matrix Manifolds}. 
\newblock Princeton University Press, 2008.

\bibitem{Barl}  F. Barl. \emph{Higher-Rank Numerical Range of Almost Normal Matrices},  MSc Thesis, 2014.

\bibitem{Kyle} K. Bierly, \emph{The normal Procrustes problem: A Riemannian optimization approach}. Preprint, 2026.

% \bibitem{BCR} J. Bochnak, M. Coste and M.-F. Roy.
% \newblock {\em Real Algebraic Geometry}.
% \newblock Springer-Verlag, Berlin, 1998.

\bibitem{Boumal} N. Boumal. {\em An introduction to optimization on smooth manifolds}. Cambridge University Press,
 2023.

 \bibitem{Manopt} N. Boumal, B. Mishra, P.-A. Absil and R. Sepulchre. {\em Manopt, a MATLAB toolbox for optimization
on manifolds}. Journal of Machine Learning Research 15(42):1455–1459, 2014


\bibitem{Causey} R. L. Causey, \emph{On Closest Normal Matrices}, Ph.D. Thesis, Department of Computer
Science, Stanford University, 1964.


\bibitem{Chu} M. T. Chu, \emph{Least squares approximation by real normal matrices with specified spectrum}, SIAM J. Matrix Anal. Appl. 12(1):115-127, 1991.

% \bibitem{CLO} D. A. Cox, J. Little and D. O'Shea.
% \newblock {\em Ideals, Varieties, and Algorithms}. 4th ed.
% \newblock Springer, Cham, 2015.

\bibitem{DK83} R. W. Daniel and B. Kouvaritakis, \emph{The choice and use of normal matrix approximations to transfer-function matrices of multivariable control systems}, Int. J.
Control 37:1121–1133, 1983.

\bibitem{DK84} R. W. Daniel and B. Kouvaritakis, \emph{Analysis and design of linear multivariable
feedback systems in the presence of additive perturbations}, Int. J.
Control 39:551–580, 1984.

\bibitem{DNN} F. Dopico, V. Noferini and L. Nyman, \emph{A Riemannian optimization method to compute the nearest singular pencil}, SIAM J. Matrix Anal.  Appl. 45(4):2007-2038, 2024.

\bibitem{EI}L. Elsner and K. D. Ikramov, \emph{Normal matrices: an update}, Linear Algebra Appl. 285:291-303, 1998.

\bibitem{Erdos} P. Erd\H{o}s, \emph{Some remarks on the measurability of certain sets}, Bull. Amer. Math. Soc.
51(11):728–731, 1945.

\bibitem{Fillmore}
P. A. Fillmore, \emph{On similarity and the diagonal of a matrix},
Am. Math. Mon. 76:167-169, 1969.

\bibitem{Gabriel79}
R. Gabriel, \emph{Matrizen mit maximaler Diagonale bien unit\"{a}re Similarit\"{a}t}, J. Reine Angew. Math. 307/308:31-52, 1979.

\bibitem{Gabriel} R. Gabriel, \emph{The normal $\Delta H$-matrices with connection to some Jacobi-like methods}, Linear Algebra Appl. 91:181--194, 1987.

\bibitem{GV} G. H. Golub and C. F. Van Loan. \emph{Matrix Computations} (4th ed.). Johns Hopkins University Press, 2013.

\bibitem{GJSW}
R. Grone, C. R. Johnson, E. M. Sa and H. Wolkowicz, \emph{Normal matrices}, Linear Algebra
Appl. 87:213–225, 1987.


\bibitem{GS} N. Guglielmi and C. Scalone, \emph{Computing the closest real normal matrix
and normal completion}, Adv. Comput. Math. 45:2867–2891, 2019.

\bibitem{Higham} N. J. Higham, \emph{Matrix nearness problems and applications}. In M. J. C. Gover and S. Barnett,
editors, Applications of Matrix Theory, pages 1–27. Oxford University Press, 1989.

\bibitem{HoJo}
R. Horn and C. R. Johnson. \emph{Matrix Analysis} (2nd ed.). Cambridge University Press, 2013.

\bibitem{Lee} J. M. Lee. 
\newblock {\em Introduction to Smooth Manifolds} (2nd ed.).
\newblock Springer, New York, 2012.


\bibitem{MV} C. Moler and C. Van Loan, \emph{Nineteen dubious ways to
compute the exponential of a
matrix, twenty-five years later}, SIAM Rev. 45(1):3--49, 2003.

\bibitem{NN} V. Noferini and L. Nyman, \emph{Finding the nearest $\Omega$-stable pencil with Riemannian optimization}, Numer. Algo. 102:569–592, 2026.

\bibitem{NP} V. Noferini and F. Poloni, \emph{Nearest $\Omega$-stable matrix via Riemannian optimization}, Numer. Math. 148:817--851, 2021.

\bibitem{NPacta} V. Noferini and F. Poloni, \emph{Matrix Nearness Problems Revisited}, In preparation, 2026.

\bibitem{NPR} S. Noschese, L. Pasquini and L. Reichel, \emph{The structured distance to normality of an irreducible real tridiagonal matrix}, Electronic Transactions on Numerical Analysis 28:65--77, 2007.

\bibitem{Ruhe} A. Ruhe, \emph{Closest normal matrix finally found!}, BIT Numer. Math. 27:585--598, 1987.

\bibitem{SN} J. J. Sakurai and J. Napolitano.
\newblock {\em Modern Quantum Mechanics} (3rd ed.).
\newblock Cambridge University Press, 2020.

\bibitem{TE} L. N. Trefethen and M. Embree, \emph{Spectra and Pseudospectra: The Behavior of Nonnormal Matrices and Operators}, Princeton University Press, 2005.

\end{thebibliography}
\end{document}